\newcommand{\ad}{\operatorname{ad}}
    \let\stdchapter\section
    \renewcommand*\section{%
    \@ifstar{\starchapter}{\@dblarg\nostarchapter}}
    \newcommand*\starchapter[1]{%
        \stdchapter*{#1}
        \thispagestyle{fancy}
        \markboth{\MakeUppercase{#1}}{}
    }
    \def\nostarchapter[#1]#2{%
        \stdchapter[{#1}]{#2}
        \thispagestyle{fancy}
    }
\newtheorem{theorem}{Theorem}[section]
\newtheorem*{theorem*}{Theorem}
\newtheorem{lemma}[theorem]{Lemma}
\newtheorem{proposition}[theorem]{Proposition}
\theoremstyle{definition}
\newtheorem{definition}[theorem]{Definition}
\theoremstyle{corollary}
\newtheorem{corollary}[theorem]{Corollary}
\theoremstyle{remark}
\newtheorem{remark}[theorem]{Remark}
\begin{document}
\title{Complex structures on stratified Lie algebras}

\author{Junze Zhang%
  \thanks{Electronic address: \texttt{z5055984@ad.unsw.edu.au}}}
\affil{School of Mathematics and Statistics, University of New South Wales, Sydney}

\date{\today}
\maketitle
 
\pagestyle{fancy}
\fancyhead{}
\fancyfoot{}
\fancyfoot[LE,RO]{\thepage}
\fancyhead[L]{\slshape\MakeUppercase{Complex structures on stratified Lie algebras}}
\newcommand{\underuparrow}[1]{\ensuremath{\underset{\uparrow}{#1}}}

\begin{abstract}
This paper investigates some properties of complex structures on Lie algebras. In particular, we focus on $\textit{nilpotent}$ $\textit{complex structures}$ that are characterized by a suitable $J$-invariant ascending or descending central series $\mathfrak{d}^j$ and $\mathfrak{d}_j$ respectively. In this article, we introduce a new descending series $\mathfrak{p}_j$ and use it to give proof of a new characterization of nilpotent complex structures. We examine also whether nilpotent complex structures on stratified Lie algebras preserve the strata. We find that there exists a $J$-invariant stratification on a step $2$ nilpotent Lie algebra with a complex structure. 
\end{abstract}

\section{Introduction}
 \label{a}

In recent years, complex structures on nilpotent Lie algebras have been shown to be very useful for understanding some geometric and algebraic properties of nilmanifolds. In $\cite{MR1665327}$ and $\cite{MR1899353}$, Cordero, Fern{\'a}ndez, Gray and Ugarte introduced $\textit{nilpotent complex}$ $\textit{structures} $, studied $6$ dimensional nilpotent Lie algebras with nilpotent complex structures, and provided a classification. Since the ascending central series is not necessarily $J$-invariant, they introduced a $J$-invariant ascending central series to characterize nilpotent complex structures. More recently, Latorre, Ugarte and Villacampa defined the space of nilpotent complex structures on nilpotent Lie algebras and further studied complex structures on nilpotent Lie algebras with one dimensional center $\cite{MR4009385}$, $ \cite{latorre2020complex}$. They also provided a theorem describing the ascending central series of $8$ dimensional nilpotent Lie algebras with complex structures. In $\cite{gao2020maximal}$, Gao, Zhao and Zheng studied the relation between the step of a nilpotent Lie algebra and the smallest integer $j_0$ such that the $J$-invariant ascending central series stops. Furthermore, they introduced a $J$-invariant descending central series, which is another tool to characterize nilpotent complex structures. These papers use the language of differential forms to characterize nilpotent complex structures. Our proofs here in this paper are purely Lie algebraic. 
 
Let $G$ be a Lie group and $\mathfrak{g} \cong T_eG$ be its Lie algebra, which we always assume to be real, unless otherwise stated. A linear isomorphism $J:TG \rightarrow TG$ is an $\textit{almost complex structure}$ if $J^2 = -I.$ By the Newlander--Nirenberg Theorem $\cite{MR88770},$ an almost complex structure $J$ corresponds to a left invariant complex structure on $G$ if and only if   \begin{equation}
    [J_eX,J_eY] -[X,Y] - J_e([J_eX,Y]+[X,J_eY]) = 0,   \label{eq:Niv}
\end{equation} for all $X,Y \in \mathfrak{g}.$ Since we are interested only on Lie algebras in this paper, from now on, we will write $J$ for $J_e.$ We will refer to $\eqref{eq:Niv}$ as the $\textit{Newlander--Nirenberg condition}.$

\section{Complex structures on nilpotent Lie algebras}
\label{b}

In this section, we consider some properties of the central series of nilpotent Lie algebras with complex structures $J$ and define $J$-invariant central series. We define nilpotent complex structures, and relate their properties to the dimension of the center $\mathfrak{z}$ of a nilpotent Lie algebra.
 
\begin{definition}
\label{acde} ($\text{See}, \text{ for instance}$, e.g., $\cite{MR1920389}$)
Let $\mathfrak{g}$ be a Lie algebra. The $\textit{descending}$ $\textit{central series}$ and $\textit{ascending}$ $\textit{central series}$ of $\mathfrak{g} $ are denoted $\mathfrak{c}_j(\mathfrak{g})$ and $\mathfrak{c}^j(\mathfrak{g})$ respectively, for all $j \geq 0 ,$ and defined inductively by \begin{eqnarray}
&  \mathfrak{c}_0(\mathfrak{g}) = \mathfrak{g},  \text{ }  \mathfrak{c}_j(\mathfrak{g}) = [\mathfrak{g},\mathfrak{c}_{j-1}(\mathfrak{g})] \label{eq:de}   ; \\  
& \mathfrak{c}^0(\mathfrak{g}) = \{0\},  \text{ } \text{ }    \mathfrak{c}^j(\mathfrak{g}) = \{X \in \mathfrak{g}:[X,\mathfrak{g}] \subseteq \mathfrak{c}^{j-1}(\mathfrak{g})\}.   \label{eq:ac}
\end{eqnarray}
\end{definition}

\begin{remark}
\label{racde}
(i) Notice that $\mathfrak{c}^1(\mathfrak{g})=\mathfrak{Z}(\mathfrak{g})  $, $\mathfrak{c}_1(\mathfrak{g}) = [\mathfrak{g},\mathfrak{g}],$ and \begin{align*}
    \mathfrak{c}^{j}(\mathfrak{g})/\mathfrak{c}^{j-1}(\mathfrak{g}) = \mathfrak{Z}\left(\mathfrak{g}/ \mathfrak{c}^{j-1}(\mathfrak{g})\right) \text{ }  \text{for all } j \geq 1,
\end{align*} where $\mathfrak{Z}(\cdot)$ means the center of a Lie algebra. Furthermore, $\mathfrak{c}_j(\mathfrak{g})/\mathfrak{c}_{j+1}(\mathfrak{g})$ $\subseteq$  $\mathfrak{Z} \left(\mathfrak{n}/\right. $   $\left.\mathfrak{c}_{j+1}(\mathfrak{g})\right)$ for all $j \geq 0.$ It is clear that $\mathfrak{c}^j(\mathfrak{g})$ and $\mathfrak{c}_j(\mathfrak{g})$ are ideals of $\mathfrak{g}$ for all $j \geq 0$.
 
(ii) A Lie algebra $\mathfrak{g}$ is called $\textit{nilpotent of step }k $, for some $k \in \mathbb{N},$ if $\mathfrak{c}_k(\mathfrak{g}) = \{0\}$ and $\mathfrak{c}_{k-1}(\mathfrak{g}) \neq \{0\}.$ We will denote nilpotent Lie algebras by $\mathfrak{n}$ in this paper. See, e.g., $\cite[\text{Section }5.2]{MR3025417}$ or $\cite{MR1920389}.$  
\end{remark}

\subsection{$J$-invariant central series and nilpotent complex structures} 
 \label{b4}

Following $\cite[\text{Definition }1]{MR1665327}$, we define the $\textit{J-invariant}$ $\textit{ascending}$ $\textit{central series}$ $\mathfrak{d}^j$ for nilpotent Lie algebras and introduce $\textit{nilpotent}$ $\textit{complex}$ $\textit{structures}$ on nilpotent Lie algebras. Furthermore, we recall the definition of the $\textit{J-invariant}$ $\textit{descending}$ $\textit{central series}$ $\mathfrak{d}_j$. $\cite[\text{Defintion 2.7}]{gao2020maximal}$ 

\begin{definition}
\label{7}
Let $ \mathfrak{n} $ be a Lie algebra with a complex structure $J.$ Define a sequence of $J$-invariant ideals of $\mathfrak{n}$ by $\mathfrak{d}^0 = \{0\}$ and \begin{equation}
      \mathfrak{d}^j  = \{ X \in \mathfrak{n} : [X,\mathfrak{n}] \subseteq  \mathfrak{d}^{j-1} , [JX,\mathfrak{n}] \subseteq \mathfrak{d}^{j-1}  \}   \label{eq:1} 
\end{equation} for all $ j \geq 1.$ We call the sequence $\mathfrak{d}^j$ the $\textit{ascending J-invariant}$ $\textit{central series}.$ The complex structure $J$ is called $\textit{nilpotent of step }j_0$ if there exists $j_0 \in \mathbb{N}$ such that $\mathfrak{d}^{j_0} = \mathfrak{n}$ and $\mathfrak{d}^{j_0-1} \subset \mathfrak{n}$. 

We define inductively the $J$-$\textit{invariant}$ $ \textit{descending}$ $ \textit{central}$  $\textit{series}$ by    \begin{equation}
    \mathfrak{d}_0 = \mathfrak{n}, \text{ } \text{ } \mathfrak{d}_j = [\mathfrak{d}_{j-1},\mathfrak{n}] + J[\mathfrak{d}_{j-1},\mathfrak{n}]  \text{ } \text{ }  \label{eq:00}
\end{equation} all for $j \geq 1.$ 
\end{definition}

\begin{remark}
\label{r7}
(i) For the ascending $J$-invariant central series $\mathfrak{d}^j$, \begin{align*}
    \mathfrak{d}^j / \mathfrak{d}^{j-1} = \mathfrak{Z}(\mathfrak{n}/\mathfrak{d}^{j-1}) \cap J\mathfrak{Z}(\mathfrak{n}/\mathfrak{d}^{j-1}) \qquad   \text{ for all } j \geq 1 .
\end{align*}  In particular, $\mathfrak{d}^1 = \mathfrak{z} \cap J\mathfrak{z},$ which is the largest $J$-invariant subspace of $\mathfrak{z} $ and,  if $J$ is nilpotent, then $\mathfrak{d}^1 \neq \{0\}$. The nilpotency of $J$ implies that the ascending $J$-invariant central series $\mathfrak{d}^j$ of $\mathfrak{n}$ is strictly increasing until $\mathfrak{d}^{j_0} = \mathfrak{n}$. Furthermore, if $\mathfrak{n}$ is a step $k$ nilpotent Lie algebra with a nilpotent complex structure $J$ of step $j_0$, then $k \leq j_0 \leq \frac{1}{2}\dim \mathfrak{n} .$ See, e.g., $\cite{MR1665327}$ and $\cite{gao2020maximal}.$
 
(ii) By definition, if $\mathfrak{n}$ admits a nilpotent complex structure, then $\mathfrak{n}$ is nilpotent.  

(iii) For all $j \geq 0,$ it is clear that $ \mathfrak{c}_j(\mathfrak{n}) + J  \mathfrak{c}_j(\mathfrak{n}) \subseteq \mathfrak{d}_j$;   Furthermore, $\mathfrak{d}_j \unlhd \mathfrak{n}$ and $\mathfrak{d}^j \unlhd \mathfrak{n}$ where $\unlhd$ is the notation of ideal.  

(iv) Let $ \mathfrak{n} $ be a Lie algebra with a complex structure $J.$  Then $J$ preserves all terms of $ \mathfrak{c}^j(\mathfrak{n})$ if and only if $\mathfrak{d}^j =  \mathfrak{c}^j(\mathfrak{n})$ for all $j \geq 0$. $\cite[\text{Corollary }5]{MR1665327} $ Similarly, $J$ preserves all terms of $\mathfrak{c}_j(\mathfrak{n})$ if and only if $\mathfrak{d}_j = \mathfrak{c}_j(\mathfrak{n})$ for all $j.$ 
 
\end{remark}
 
The following lemma provides a connection between $J$-invariant ascending and descending central series.

\begin{lemma}
\label{9}
Let $\mathfrak{n}$ be a Lie algebra with a complex structure $J$. Suppose that $J$ is nilpotent of step $j_0.$

(i) Then $\mathfrak{n}/\mathfrak{d}^{j_0-1}$ is Abelian. Conversely, if there exists $j_0\in \mathbb{N}$ such that $\mathfrak{n}/\mathfrak{d}^{j_0-1}$ is Abelian, then $J$ is nilpotent of step at most $j_0.$

(ii) Then $\mathfrak{d}_j \subseteq \mathfrak{d}^{j_0 - j} $ for all $j \geq 0 $. Conversely, if there exists $j_0 \in \mathbb{N}$ such that $\mathfrak{d}_j \subseteq \mathfrak{d}^{j_0 - j} $ for all $j \geq 0,$ then $J$ is nilpotent of step at most $j_0.$    
\end{lemma}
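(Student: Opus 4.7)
The plan is to treat both parts by unwinding definitions and using the $J$-invariance of the terms of $\mathfrak{d}^{\bullet}$. Part (i) will follow directly from the characterization of $\mathfrak{d}^{j_0}$; part (ii) splits into a forward direction by induction on $j$ (using the fact that each $\mathfrak{d}^k$ is $J$-invariant and that $[\mathfrak{d}^k,\mathfrak{n}]\subseteq\mathfrak{d}^{k-1}$), and a converse that will reduce cleanly to part (i) by specializing to $j=1$.

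For part (i), in the forward direction, I would observe that $\mathfrak{d}^{j_0}=\mathfrak{n}$ means every $X\in\mathfrak{n}$ satisfies $[X,\mathfrak{n}]\subseteq\mathfrak{d}^{j_0-1}$ by the defining relation \eqref{eq:1}, so $[\mathfrak{n},\mathfrak{n}]\subseteq\mathfrak{d}^{j_0-1}$, which is the statement that $\mathfrak{n}/\mathfrak{d}^{j_0-1}$ is abelian. For the converse, if $[\mathfrak{n},\mathfrak{n}]\subseteq\mathfrak{d}^{j_0-1}$, then for any $X\in\mathfrak{n}$ both $[X,\mathfrak{n}]$ and $[JX,\mathfrak{n}]$ sit inside $[\mathfrak{n},\mathfrak{n}]\subseteq\mathfrak{d}^{j_0-1}$, so $X\in\mathfrak{d}^{j_0}$; hence $\mathfrak{d}^{j_0}=\mathfrak{n}$ and $J$ is nilpotent of step at most $j_0$. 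Note that this converse does not even use $J$-invariance of $\mathfrak{d}^{j_0-1}$, since the containment is obtained through $[\mathfrak{n},\mathfrak{n}]$.

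For part (ii), I would induct on $j$. The base case $j=0$ reads $\mathfrak{n}=\mathfrak{d}_0\subseteq\mathfrak{d}^{j_0}=\mathfrak{n}$. Assuming $\mathfrak{d}_{j-1}\subseteq\mathfrak{d}^{j_0-j+1}$, the key step uses the defining relation \eqref{eq:1} to give $[\mathfrak{d}^{j_0-j+1},\mathfrak{n}]\subseteq\mathfrak{d}^{j_0-j}$, so $[\mathfrak{d}_{j-1},\mathfrak{n}]\subseteq\mathfrak{d}^{j_0-j}$; since Remark \ref{r7} records that $\mathfrak{d}^{j_0-j}$ is $J$-invariant, we also get $J[\mathfrak{d}_{j-1},\mathfrak{n}]\subseteq\mathfrak{d}^{j_0-j}$, and adding the two inclusions yields $\mathfrak{d}_j\subseteq\mathfrak{d}^{j_0-j}$ by the definition in \eqref{eq:00}. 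For the converse, I would specialize to $j=1$ to obtain $[\mathfrak{n},\mathfrak{n}]\subseteq\mathfrak{d}_1\subseteq\mathfrak{d}^{j_0-1}$, so that $\mathfrak{n}/\mathfrak{d}^{j_0-1}$ is abelian, and then invoke part (i) to conclude. The main obstacle — really the only non-mechanical step — is recognizing that the $J$-invariance of every term in the ascending series is exactly what is needed to absorb the extra $J[\mathfrak{d}_{j-1},\mathfrak{n}]$ summand in $\mathfrak{d}_j$; everything else is bookkeeping with the definitions.
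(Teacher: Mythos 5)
Your proposal is correct and follows essentially the same route as the paper: the forward direction of (i) by unwinding the definition of $\mathfrak{d}^{j_0}=\mathfrak{n}$, the forward direction of (ii) by induction using $[\mathfrak{d}^{k},\mathfrak{n}]\subseteq\mathfrak{d}^{k-1}$ together with the $J$-invariance of $\mathfrak{d}^{k-1}$, and both converses by reducing to the abelianness of $\mathfrak{n}/\mathfrak{d}^{j_0-1}$. The only cosmetic difference is that the paper phrases the forward direction of (i) via the quotient-center characterization $\mathfrak{d}^{j}/\mathfrak{d}^{j-1}=\mathfrak{Z}(\mathfrak{n}/\mathfrak{d}^{j-1})\cap J\mathfrak{Z}(\mathfrak{n}/\mathfrak{d}^{j-1})$ rather than directly from the defining relation.
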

 
\begin{proof}
For part (i), suppose that $J$ is nilpotent of step $j_0$. By definition, $\mathfrak{d}^{j_0} = \mathfrak{n}$ and $\mathfrak{d}^{j_0-1} \subset \mathfrak{n}$. Then  \begin{align*}
    \mathfrak{Z}(\mathfrak{n}/\mathfrak{d}^{j_0-1}) \cap J\mathfrak{Z}(\mathfrak{n}/ \mathfrak{d}^{j_0-1}) = \mathfrak{n}/\mathfrak{d}^{j_0-1}.
\end{align*} It is obvious that $ \mathfrak{Z}(\mathfrak{n}/\mathfrak{d}^{j_0-1}) = \mathfrak{n}/\mathfrak{d}^{j_0-1}.$ Hence $\mathfrak{n}/\mathfrak{d}^{j_0-1}$ is Abelian.

Conversely, suppose that there exists $j_0 \in \mathbb{N}$ such that $ \mathfrak{n}/ \mathfrak{d}^{j_0-1}$ is Abelian. Then  $ \{0\} \neq \mathfrak{c}_1(\mathfrak{n}) \subseteq \mathfrak{d}^{j_0-1}$. For all $X  \in \mathfrak{n},$  \begin{align*}
    [X,\mathfrak{n}] \subseteq  \mathfrak{d}^{j_0 - 1} \text{ and }  [JX,\mathfrak{n}] \subseteq \mathfrak{d}^{j_0 -1}.
\end{align*} We deduce that $ \mathfrak{n} = \mathfrak{d}^{j_0}$ and therefore $J$ is nilpotent of step at most $j_0$.  

For part (ii), assume that $J$ is nilpotent of step $j_0.$ By definition, $\mathfrak{d}_0 = \mathfrak{n} = \mathfrak{d}^{j_0}.$ Next, assume that $\mathfrak{d}_{s-1} \subseteq \mathfrak{d}^{j_0-s+1}$ for some $s \in \mathbb{N}.$ Then \begin{align*}
    \mathfrak{d}_s & = [\mathfrak{d}_{s-1},\mathfrak{n}] + J[\mathfrak{d}_{s-1},\mathfrak{n}] \\
    & \subseteq [\mathfrak{d}^{j_0 -s + 1},\mathfrak{n}] + J [\mathfrak{d}^{j_0 -s + 1},\mathfrak{n}] \\
    & \subseteq \mathfrak{d}^{j_0-s} + J\mathfrak{d}^{j_0-s}  = \mathfrak{d}^{j_0-s}.
\end{align*} Hence by induction, $\mathfrak{d}_j \subseteq \mathfrak{d}^{j_0 - j} $ for all $j \geq 0.$ 

Conversely, suppose that there exists $j_0 \in \mathbb{N}$ such that $\mathfrak{d}_j \subseteq \mathfrak{d}^{j_0 - j} $ for all $j \geq 0$. In particular, $  \mathfrak{d}_1 \subseteq \mathfrak{d}^{j_0 -1}.$ By definition, $\mathfrak{c}_1(\mathfrak{n}) \subseteq \mathfrak{d}_1.$ It follows that $$   [\mathfrak{n}/\mathfrak{d}^{j_0-1}, \mathfrak{n}/\mathfrak{d}^{j_0-1}] \subseteq [\mathfrak{n},\mathfrak{n}] + \mathfrak{d}^{j_0-1} = \mathfrak{c}_1(\mathfrak{n}) + \mathfrak{d}^{j_0-1} \subseteq \mathfrak{d}_1 + \mathfrak{d}^{j_0-1} \subseteq \mathfrak{d}_{j_0-1},$$ and thus $\mathfrak{n}/\mathfrak{d}^{j_0 -1}$ is Abelian. From $\text{Lemma } \ref{9}$, $J$ is nilpotent of step at most $j_0.$    
\end{proof}
  
\begin{remark}
\label{r41}
Under the condition of Lemma $\ref{9},$ if $J$ is nilpotent of step $j_0,$ $\mathfrak{d}_{j_0-1} \subseteq \mathfrak{d}^1 \subseteq \mathfrak{z}.$ Then $\mathfrak{d}_{j_0-1}$ is Abelian. Furthermore, there exists $j_0 \in \mathbb{N}$ such that $\mathfrak{n}/\mathfrak{d}^{j_0-1}$ is Abelian if and only if $\mathfrak{d}_j \subseteq \mathfrak{d}^{j_0-j}$ for all $j \geq 0.$ This is proved by induction as in the proof of $\text{Lemma }\ref{9}$.  
\end{remark}

\begin{corollary}
Let $\mathfrak{n}$ be a step $k$ nilpotent Lie algebra with a complex structure $J.$ Then $J$ is nilpotent of step $k$ if and only if $\mathfrak{d}_j \subseteq \mathfrak{d}^{k-j}$ for all $j \geq 0$.
\end{corollary}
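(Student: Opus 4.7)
The plan is to deduce this corollary directly from Lemma \ref{9}(ii) together with the dimension bound in Remark \ref{r7}(i), with essentially no new computation.

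For the forward direction, assume $J$ is nilpotent of step $k$. Applying Lemma \ref{9}(ii) with $j_0 = k$ gives $\mathfrak{d}_j \subseteq \mathfrak{d}^{k-j}$ for all $j \geq 0$, which is exactly what we want.

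For the converse, suppose $\mathfrak{d}_j \subseteq \mathfrak{d}^{k-j}$ for all $j \geq 0$. The converse half of Lemma \ref{9}(ii), applied with $j_0 = k$, tells us that $J$ is nilpotent of some step $j_0 \leq k$. To upgrade this to equality $j_0 = k$, I would invoke the inequality $k \leq j_0$ recorded in Remark \ref{r7}(i), which holds for any step $k$ nilpotent Lie algebra admitting a nilpotent complex structure. Combining $j_0 \leq k$ and $k \leq j_0$ forces $j_0 = k$.

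There is no serious obstacle here; the only subtle point is remembering to apply the two-sided inequality from Remark \ref{r7}(i) to rule out the possibility that $J$ is nilpotent of strictly smaller step than $k$. Without that ingredient, Lemma \ref{9}(ii) alone yields only a one-sided conclusion.
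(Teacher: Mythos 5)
Your proof is correct, and the forward direction coincides with the paper's. The converse, however, closes the gap between ``step at most $k$'' and ``step exactly $k$'' by a different mechanism. You invoke the inequality $k \leq j_0$ recorded in Remark \ref{r7}(i), which the paper only quotes from the literature (\cite{MR1665327}, \cite{gao2020maximal}) without proof; combined with $j_0 \leq k$ from Lemma \ref{9}(ii) this does force $j_0 = k$. The paper instead stays self-contained: it observes that $\{0\} \neq \mathfrak{c}_{k-1}(\mathfrak{n}) \subseteq \mathfrak{d}_{k-1}$ (using Remark \ref{r7}(iii)), so $\mathfrak{d}_{k-1} \neq \{0\}$, and this rules out nilpotency of step $j_0 < k$, since by Lemma \ref{9}(ii) that would give $\mathfrak{d}_{k-1} \subseteq \mathfrak{d}_{j_0} \subseteq \mathfrak{d}^{0} = \{0\}$. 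Your route is shorter but leans on an externally cited bound; the paper's route buys independence from that citation at the cost of one extra observation about $\mathfrak{c}_{k-1}(\mathfrak{n})$. Either argument is acceptable, but if you want the corollary to rest only on what is proved in the paper, the $\mathfrak{c}_{k-1}(\mathfrak{n}) \subseteq \mathfrak{d}_{k-1}$ observation is the preferable ingredient.
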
 
\begin{proof}
Suppose that $J$ is nilpotent of step $k.$ By $\text{Lemma }\ref{9},$ $\mathfrak{d}_j \subseteq \mathfrak{d}^{k-j}$ for all $j \geq 0.$ Conversely, assume that $\mathfrak{d}_j \subseteq \mathfrak{d}^{k-j}$ for all $j.$ Again by Lemma $\ref{9}$, $J$ is nilpotent of step at most $k.$ Furthermore, it follows that $ \{0\} \neq \mathfrak{c}_{k-1}(\mathfrak{n}) \subseteq  \mathfrak{d}_{k-1}.$ Therefore $\mathfrak{d}_{k-1} \neq \{0\} $ and $J$ is nilpotent of step $k.$
\end{proof}

\begin{remark}
From Remark $\ref{r41},$ $J$ is nilpotent of step $k$ if and only if $\mathfrak{n}/\mathfrak{d}^{k-1}$ is Abelian.
\end{remark}
 
We introduce a new descending central series whose descending `rate' is slower than that of $\mathfrak{c}_j(\mathfrak{n})$ but faster than that of $\mathfrak{d}_j.$
 
\begin{definition}
\label{p3}
Let $J$ be a complex structure on a Lie algebra $\mathfrak{n}.$ We define the sequence $\mathfrak{p}_j$ inductively by  \begin{equation}
     \mathfrak{p}_0 = \mathfrak{n} \text{ and }     \mathfrak{p}_j =  [\mathfrak{p}_{j-1},\mathfrak{n}] + [J\mathfrak{p}_{j-1},\mathfrak{n}]  \text{ for all } j \geq 1 . \label{eq:p}
 \end{equation}  
\end{definition}

\begin{remark}
 It is clear that $\mathfrak{p}_{j+1} \subseteq \mathfrak{p}_j$ for all $j \geq 0.$  Furthermore, $\mathfrak{p}_j \unlhd \mathfrak{n}$ since $[\mathfrak{p}_j,\mathfrak{n}] \subseteq \mathfrak{p}_{j+1} \subseteq \mathfrak{p}_j$ for all $j \geq 0$.
\end{remark}
 
\begin{lemma}
\label{p}
Let $\mathfrak{n}$ be a Lie algebra with a complex structure $J $. Then $ \mathfrak{c}_j(\mathfrak{n}) \subseteq \mathfrak{p}_j$ for all $j \geq 0$. Furthermore, $\mathfrak{p}_j \subseteq \mathfrak{d}_j $ and $ J  \mathfrak{p}_j  \subseteq   \mathfrak{d}_j $ for all $j \geq 0$.  
\end{lemma}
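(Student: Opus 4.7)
The plan is a straightforward induction on $j$, carried out in two independent passes, one for each containment.

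\textbf{First pass: $\mathfrak{c}_j(\mathfrak{n}) \subseteq \mathfrak{p}_j$.} The base case $j=0$ is immediate since both sides equal $\mathfrak{n}$. For the inductive step, assume $\mathfrak{c}_{j-1}(\mathfrak{n}) \subseteq \mathfrak{p}_{j-1}$. By Definition~\ref{acde} and Definition~\ref{p3},
\[
\mathfrak{c}_j(\mathfrak{n}) = [\mathfrak{n}, \mathfrak{c}_{j-1}(\mathfrak{n})] \subseteq [\mathfrak{n}, \mathfrak{p}_{j-1}] = [\mathfrak{p}_{j-1}, \mathfrak{n}] \subseteq \mathfrak{p}_j,
\]
using only that the extra summand $[J\mathfrak{p}_{j-1},\mathfrak{n}]$ in $\mathfrak{p}_j$ can be dropped.

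\textbf{Second pass: $\mathfrak{p}_j \subseteq \mathfrak{d}_j$ and $J\mathfrak{p}_j \subseteq \mathfrak{d}_j$ simultaneously.} This is the heart of the argument and it is important to carry the two statements together through the induction, because the recursive definition of $\mathfrak{p}_j$ already references $J\mathfrak{p}_{j-1}$. The base case $j=0$ gives $\mathfrak{p}_0 = \mathfrak{n} = \mathfrak{d}_0$ and $J\mathfrak{p}_0 = J\mathfrak{n} = \mathfrak{n} = \mathfrak{d}_0$ since $J$ is a linear isomorphism. Assume inductively that $\mathfrak{p}_{j-1} \subseteq \mathfrak{d}_{j-1}$ and $J\mathfrak{p}_{j-1} \subseteq \mathfrak{d}_{j-1}$. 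Then
\[
\mathfrak{p}_j = [\mathfrak{p}_{j-1},\mathfrak{n}] + [J\mathfrak{p}_{j-1},\mathfrak{n}] \subseteq [\mathfrak{d}_{j-1},\mathfrak{n}] + [\mathfrak{d}_{j-1},\mathfrak{n}] = [\mathfrak{d}_{j-1},\mathfrak{n}] \subseteq \mathfrak{d}_j.
\]
For the $J$-statement, apply $J$ term by term and use the inductive hypothesis again on each summand:
\[
J\mathfrak{p}_j = J[\mathfrak{p}_{j-1},\mathfrak{n}] + J[J\mathfrak{p}_{j-1},\mathfrak{n}] \subseteq J[\mathfrak{d}_{j-1},\mathfrak{n}] + J[\mathfrak{d}_{j-1},\mathfrak{n}] \subseteq \mathfrak{d}_j,
\]
where the last inclusion is exactly the $J[\mathfrak{d}_{j-1},\mathfrak{n}]$ summand appearing in the definition \eqref{eq:00} of $\mathfrak{d}_j$.

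There is no real obstacle here; the only thing to notice is that a naive induction on $\mathfrak{p}_j \subseteq \mathfrak{d}_j$ alone would fail, because the $[J\mathfrak{p}_{j-1},\mathfrak{n}]$ summand of $\mathfrak{p}_j$ cannot be controlled by the hypothesis $\mathfrak{p}_{j-1}\subseteq\mathfrak{d}_{j-1}$ without knowing that $J\mathfrak{p}_{j-1}$ also lies in $\mathfrak{d}_{j-1}$. Packaging the two inclusions into a single induction resolves this cleanly.
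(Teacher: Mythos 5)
Your proof is correct and follows essentially the same route as the paper: a short induction for $\mathfrak{c}_j(\mathfrak{n})\subseteq\mathfrak{p}_j$, followed by a simultaneous induction carrying both $\mathfrak{p}_j\subseteq\mathfrak{d}_j$ and $J\mathfrak{p}_j\subseteq\mathfrak{d}_j$ together, which is exactly the paper's argument. Your closing observation about why the two inclusions must be packaged into one induction is a sound (and correctly identified) point that the paper leaves implicit.
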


\begin{proof}
By definition, $\mathfrak{c}_0(\mathfrak{n}) = \mathfrak{n} = \mathfrak{p}_0.$ It follows, by induction, that $ \mathfrak{c}_j(\mathfrak{n}) \subseteq \mathfrak{p}_j$ for all $j \geq 0$. Using $\eqref{eq:00},$ $[\mathfrak{d}_{j-1},\mathfrak{n}] \subseteq \mathfrak{d}_j $. By definition, $ \mathfrak{p}_0 = \mathfrak{n} = \mathfrak{d}_0$ and $J\mathfrak{p}_0 = J\mathfrak{n} = \mathfrak{n} = \mathfrak{d}_0$. Next, suppose that $\mathfrak{p}_s \subseteq \mathfrak{d}_s$ and $J\mathfrak{p}_s \subseteq \mathfrak{d}_s$ for some $s \in \mathbb{N}.$ Then by $\eqref{eq:p},$ \begin{align*}
     \mathfrak{p}_{s+1}  = [\mathfrak{p}_s,\mathfrak{n}] + [J\mathfrak{p}_s,\mathfrak{n}] \subseteq  [\mathfrak{d}_s,\mathfrak{n}] \subseteq \mathfrak{d}_{s+1} \text{ and } J\mathfrak{p}_{s+1} \subseteq J[\mathfrak{d}_s,\mathfrak{n}] \subseteq \mathfrak{d}_{s+1}
 \end{align*} By induction, $   \mathfrak{p}_j  \subseteq   \mathfrak{d}_j $ and $J\mathfrak{p}_j  \subseteq   \mathfrak{d}_j  $ for all $j \geq 0.$   
\end{proof}

 \begin{remark}
 \label{rp}
(i)  Notice that $\mathfrak{p}_j/\mathfrak{p}_{j+1} \subseteq \mathfrak{Z}\left(\mathfrak{n}/\mathfrak{p}_{j+1}\right)$ for all $j \geq 0.$ Indeed, for all $P \in \mathfrak{p}_j$ and $Y \in \mathfrak{n},$ since $[P,Y] \subseteq \mathfrak{p}_{j+1},$ it is enough to deduce \begin{equation*}
    [P+ \mathfrak{p}_{j+1},Y + \mathfrak{p}_{j+1}] =[P,Y] + \mathfrak{p}_{j+1} \subseteq \mathfrak{p}_{j+1}. 
\end{equation*} 
Hence $\mathfrak{p}_j/\mathfrak{p}_{j+1} \subseteq \mathfrak{Z}\left(\mathfrak{n}/\mathfrak{p}_{j+1}\right)$.  

(ii) By $\text{Lemma }\ref{p},$  $  \mathfrak{p}_j + J \mathfrak{p}_j \subseteq \mathfrak{d}_j$ for all $j \geq 0$. We show that $\mathfrak{p}_j + J \mathfrak{p}_j \unlhd \mathfrak{n}$ for all $j \geq 0.$ Indeed, for all $P,P' \in \mathfrak{p}_j,$ $$
    \underbrace{[P+JP',\mathfrak{n}]}_\text{$\subseteq [\mathfrak{p}_j + J \mathfrak{p}_j,\mathfrak{n}]$} \subseteq  \underbrace{[P,\mathfrak{n}]}_\text{$ \subseteq \mathfrak{p}_{j+1}$} +  \underbrace{[JP',\mathfrak{n}]}_\text{$ \subseteq \mathfrak{p}_{j+1}$} \subseteq \mathfrak{p}_{j+1} \subseteq \mathfrak{p}_{j+1} + J\mathfrak{p}_{j+1}\subseteq \mathfrak{p}_j + J\mathfrak{p}_j.$$ Hence $\mathfrak{p}_j + J \mathfrak{p}_j \unlhd \mathfrak{n}$. From part (ii), we can show that $\mathfrak{p}_j + J\mathfrak{p}_j$ is a $J$-invariant descending central series. Indeed, for all $T = P + JP' \in \mathfrak{p}_j + J \mathfrak{p}_j$ and $Y \in \mathfrak{n},$ \begin{align*}
        [T+ \mathfrak{p}_{j+1} + J\mathfrak{p}_{j+1},Y + \mathfrak{p}_{j+1} +J\mathfrak{p}_{j+1}] \subseteq [T,Y] + \mathfrak{p}_{j+1}  +J\mathfrak{p}_{j+1}\subseteq \mathfrak{p}_{j+1}  +J\mathfrak{p}_{j+1}.
    \end{align*} 
 \end{remark}
 
 \begin{theorem}
\label{14}
Let $\mathfrak{n}$ be a Lie algebra with a complex structure $J.$ The following are equivalent:  
  
(i)   $J$ is nilpotent of step $j_0$;
  
(ii) $ \mathfrak{p}_{j_0} = \{0\}$ and $ \mathfrak{p}_{j_0-1} \neq \{0\};$  
 
(iii) $ \mathfrak{d}_{j_0} = \{0\}$ and $\mathfrak{d}_{j_0 - 1} \neq \{0\}$. 
\end{theorem}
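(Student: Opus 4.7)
The plan is to establish the two equivalences (i) $\Leftrightarrow$ (ii) and (i) $\Leftrightarrow$ (iii) separately, since Lemma \ref{p} only provides the inclusion $\mathfrak{p}_j \subseteq \mathfrak{d}_j$, which transfers vanishing one way but says nothing about non-vanishing. The forward implications (i) $\Rightarrow$ (ii), (iii) are essentially free from earlier results: Lemma \ref{9}(ii) yields $\mathfrak{d}_{j_0} \subseteq \mathfrak{d}^{0} = \{0\}$, and Lemma \ref{p} gives $\mathfrak{p}_{j_0} \subseteq \mathfrak{d}_{j_0} = \{0\}$. What requires real work is both converses together with the non-vanishing of $\mathfrak{p}_{j_0-1}$ and $\mathfrak{d}_{j_0-1}$.

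The backbone of both converses is a single auxiliary claim proved by downward induction: if $\mathfrak{p}_N = \{0\}$ (respectively $\mathfrak{d}_N = \{0\}$) for some $N \in \mathbb{N}$, then $\mathfrak{p}_{N-j} \subseteq \mathfrak{d}^{j}$ (respectively $\mathfrak{d}_{N-j} \subseteq \mathfrak{d}^{j}$) for all $0 \leq j \leq N$. The base case $j=0$ is the hypothesis. For the inductive step in the $\mathfrak{p}$-case, any $X \in \mathfrak{p}_{N-j-1}$ satisfies $[X,\mathfrak{n}] \subseteq \mathfrak{p}_{N-j}$ and $[JX,\mathfrak{n}] \subseteq \mathfrak{p}_{N-j}$ directly from \eqref{eq:p}, so $X \in \mathfrak{d}^{j+1}$ by the inductive hypothesis. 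For the $\mathfrak{d}$-case, I first observe that $\mathfrak{d}_{j}$ is $J$-invariant: from \eqref{eq:00} any element $A + JB \in \mathfrak{d}_j$ with $A,B \in [\mathfrak{d}_{j-1},\mathfrak{n}]$ is sent by $J$ to $JA - B$, which again lies in $\mathfrak{d}_j$ by construction. Given $J$-invariance, $X \in \mathfrak{d}_{N-j-1}$ implies $JX \in \mathfrak{d}_{N-j-1}$, so both $[X,\mathfrak{n}]$ and $[JX,\mathfrak{n}]$ lie in $[\mathfrak{d}_{N-j-1},\mathfrak{n}] \subseteq \mathfrak{d}_{N-j} \subseteq \mathfrak{d}^{j}$, placing $X$ in $\mathfrak{d}^{j+1}$.

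Specialising the auxiliary claim to $j = N = j_0$ gives $\mathfrak{n} \subseteq \mathfrak{d}^{j_0}$, hence $J$ is nilpotent of step at most $j_0$; the non-vanishing hypothesis in (ii) or (iii) rules out strict inequality, since if the step were some $j_0' \leq j_0 - 1$ then Lemma \ref{9}(ii) would give $\mathfrak{d}_{j_0'} = \{0\}$ and, using the monotonicity of the two filtrations together with Lemma \ref{p}, would force $\mathfrak{d}_{j_0-1} = \{0\}$ or $\mathfrak{p}_{j_0-1} = \{0\}$, a contradiction. Conversely, the non-vanishing halves of (i) $\Rightarrow$ (ii) and (i) $\Rightarrow$ (iii) follow from the same auxiliary claim applied contrapositively at $N = j_0 - 1$: the vanishing of $\mathfrak{p}_{j_0-1}$ or $\mathfrak{d}_{j_0-1}$ would yield $\mathfrak{n} = \mathfrak{d}^{j_0-1}$, contradicting that $J$ has step exactly $j_0$. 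The main obstacle I expect is the $J$-invariance of $\mathfrak{d}_j$; without it, the inductive step in the $\mathfrak{d}$-case cannot control the bracket $[JX,\mathfrak{n}]$, and the argument collapses. Once that observation is in place the rest is bookkeeping.
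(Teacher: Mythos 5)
Your proof is correct and follows essentially the same route as the paper: both hinge on the downward induction transferring $\mathfrak{p}_N = \{0\}$ (resp.\ $\mathfrak{d}_N = \{0\}$) into $\mathfrak{p}_{N-j} \subseteq \mathfrak{d}^{j}$ (resp.\ $\mathfrak{d}_{N-j} \subseteq \mathfrak{d}^{j}$), with the exactness of the step handled by the same kind of contradiction argument. Your explicit verification that $\mathfrak{d}_j$ is $J$-invariant is a welcome addition, since the paper's inductive step from $[\mathfrak{d}_{j_0-s-1},\mathfrak{n}] \subseteq \mathfrak{d}^{s}$ to $\mathfrak{d}_{j_0-s-1} \subseteq \mathfrak{d}^{s+1}$ silently needs it to control $[JX,\mathfrak{n}]$.
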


\begin{proof}
We first show that (i) and (ii) are equivalent. Assume that $J$ is nilpotent of step $j_0.$ From $\text{Lemma }\ref{9}$ part (ii), $\mathfrak{d}_{j_0-1} \subseteq \mathfrak{d}^1.$ Hence by $\text{Lemma }\ref{p},$   $$   \mathfrak{p}_{j_0}\subseteq   [\mathfrak{d}_{j_0-1},\mathfrak{n}] \subseteq  [\mathfrak{d}^1,\mathfrak{n}]= \{0\}.$$ Thus $   \mathfrak{p}_{j_0}= \{0\} $. Assume, by contradiction, that $ \mathfrak{p}_{j_0-1} = \{0\}.$ We show that $\mathfrak{p}_{j_0-j-1} + J\mathfrak{p}_{j_0-j-1} \subseteq \mathfrak{d}^j $ for all $j \geq 0 $ by induction. By definition, $  \mathfrak{p}_{j_0-1} + J \mathfrak{p}_{j_0-1} = \{0\} = \mathfrak{d}^0.$ Next, suppose that $ \mathfrak{p}_{j_0-s-1} + J   \mathfrak{p}_{j_0-s-1}\subseteq\mathfrak{d}^s $ for some $s \in \mathbb{N}.$ Then from $\text{Remark }\ref{rp}$ part (ii),  \begin{equation*}
   [\mathfrak{p}_{j_0-s-2} + J \mathfrak{p}_{j_0-s-2},\mathfrak{n}]   \subseteq \mathfrak{p}_{j_0-s-1} + J\mathfrak{p}_{j_0-s-1} \subseteq \mathfrak{d}^s. 
\end{equation*} This implies, using $\eqref{eq:1},$ $  \mathfrak{p}_{j_0-s-2}  +J \mathfrak{p}_{j_0-s-2} \subseteq \mathfrak{d}^{s+1}.$ By induction, $\mathfrak{p}_{j_0-j-1} + J\mathfrak{p}_{j_0-j-1}$   $ \subseteq \mathfrak{d}^j $ for all $j \geq 0.$ In particular, let $j = j_0-1.$ Then $\mathfrak{n} \subseteq \mathfrak{d}^{j_0-1}$, which implies that $J$ is nilpotent of step $j_0-1$ by definition. This is a contradiction. Therefore $ \mathfrak{p}_{j_0-1} \neq \{0\}.$
 
Conversely, suppose that $ \mathfrak{p}_{j_0} = \{0\}$ and $ \mathfrak{p}_{j_0-1} \neq \{0\}$.  We show that $J$ is nilpotent of step $j_0.$ By definition, $  \mathfrak{p}_{j_0} + J \mathfrak{p}_{j_0} = \{0\} = \mathfrak{d}^0 $. It follows, by induction, that $\mathfrak{p}_{j_0-j} + J\mathfrak{p}_{j_0-j} \subseteq \mathfrak{d}^{j} $ for all $j \geq 0.$ Hence $\mathfrak{p}_{j_0-j} \subseteq \mathfrak{d}^j$. In particular, let $j =j_0-1.$ Then $$ \mathfrak{p}_1 =     [\mathfrak{n},\mathfrak{n}] \subseteq  \mathfrak{d}^{j_0-1} \Rightarrow \mathfrak{n}/\mathfrak{d}^{j_0-1} \text{ is Abelian }.$$  By $\text{Lemma }\ref{9},$ $J$ is nilpotent of step at most $j_0.$ 

We next show that $\mathfrak{d}^{j_0-1} \neq \mathfrak{n}$ by contradiction. Assume, by contradiction, that $ \mathfrak{n}= \mathfrak{d}^{j_0-1}.$ We show that $\mathfrak{p}_{j-1} \subseteq \mathfrak{d}^{j_0-j}$ for all $j \geq 1$ by induction. By definition, $\mathfrak{p}_0   = \mathfrak{n}= \mathfrak{d}^{j_0-1}.$  Next, suppose that $\mathfrak{p}_{s-1} \subseteq \mathfrak{d}^{j_0-s}$ for some $s \in \mathbb{N}.$ Then  \begin{align*}
    \mathfrak{p}_s & = [\mathfrak{p}_{s-1},\mathfrak{n}] +[J\mathfrak{p}_{s-1},\mathfrak{n}] \\
    & \subseteq [\mathfrak{d}^{j_0-s},\mathfrak{n}] +[J\mathfrak{d}^{j_0-s},\mathfrak{n}] \\
    & \subseteq \mathfrak{d}^{j_0-s-1}.
\end{align*} By induction, $\mathfrak{p}_{j-1} \subseteq \mathfrak{d}^{j_0-j}$ for all $j \geq 1.$ In particular, let $j = j_0.$ We deduce that $  \mathfrak{p}_{j_0-1} \subseteq \mathfrak{d}^0 = \{0\}.$ This implies that $ \mathfrak{p}_{j_0-1} = \{0\}$ which is a contradiction. Hence $\mathfrak{d}^{j_0-1} \neq \mathfrak{n}.$ By definition, $J$ is nilpotent of step $j_0.$

We now show (i) and (iii) are equivalent. Since $J$ is nilpotent of step $j_0,$ it follows, from $\text{Lemma } \ref{9}$ part (ii), that $\mathfrak{d}_j \subseteq \mathfrak{d}^{j_0 - j}$ for all $j \geq 0.$ In particular, let $j = j_0.$ By definition, $\mathfrak{d}_{j_0} = \mathfrak{d}^0 = \{0\}.$ We show that $\mathfrak{d}_{j_0-1} \neq \{0\}.$ By Lemma $\ref{p}$, $\{0\} \neq   \mathfrak{p}_{j_0-1} + J  \mathfrak{p}_{j_0-1} \subseteq \mathfrak{d}_{j_0-1}.$ Hence $\mathfrak{d}_{j_0-1} \neq \{0\}.$
 
 Conversely, assume that $\mathfrak{d}_{j_0} = \{0\}$ and $\mathfrak{d}_{j_0-1} \neq \{0\}$. By definition, $[\mathfrak{d}_{j_0-1},\mathfrak{n}] \subseteq \mathfrak{d}_{j_0} = \{0\}.$ Hence, $\{0\} \neq \mathfrak{d}_{j_0-1} \subseteq \mathfrak{d}^1.$ Next, assume that $\mathfrak{d}_{j_0-s} \subseteq \mathfrak{d}^s$ for some $s \in \mathbb{N}.$ Then by definition, \begin{align*}
  [\mathfrak{d}_{j_0-s-1},\mathfrak{n}] \subseteq  \mathfrak{d}_{j_0-s} \subseteq \mathfrak{d}^s. 
\end{align*}  By $\eqref{eq:1},$ $\mathfrak{d}_{j_0-s-1} \subseteq \mathfrak{d}^{s+1}.$ By induction, $\mathfrak{d}_{j_0-j} \subseteq \mathfrak{d}^j$ for all $j \geq 0.$ Let $j= j_0.$ We find that $\mathfrak{d}_0 = \mathfrak{n} \subseteq \mathfrak{d}^{j_0}.$ Therefore $\mathfrak{d}^{j_0} = \mathfrak{n}$ and $J$ is nilpotent of step at most $j_0.$ 
 
We next show that $\mathfrak{d}^{j_0-1} \neq \mathfrak{n}.$ Suppose not, that is, $  \mathfrak{n} =\mathfrak{d}^{j_0-1}.$ By definition, $\mathfrak{d}_0 = \mathfrak{n} = \mathfrak{d}^{j_0-1}.$ It follows, by induction, that $\mathfrak{d}_{j-1} \subseteq \mathfrak{d}^{j_0-j}$ for all $j \geq 1$. Let $j = j_0.$ We find that $ \mathfrak{d}_{j_0-1} \subseteq \{0\}.$ This is a contradiction. Hence $\mathfrak{d}^{j_0-1} \neq \mathfrak{n} $ and $J$ is nilpotent of step $j_0.$

Finally, since (i) is equivalent to both (ii) and (iii), we conclude that (ii) and (iii) are equivalent.
\end{proof}

\begin{remark}
\label{r35}
Suppose that a Lie algebra $\mathfrak{n}$ admits a nilpotent complex structure $J$ of step $j_0.$ Then  \begin{equation}
    \mathfrak{c}_j(\mathfrak{n}) + J\mathfrak{c}_j(\mathfrak{n}) \subseteq \mathfrak{p}_j + J \mathfrak{p}_j \subseteq \mathfrak{d}_{j} \subseteq \mathfrak{d}^{j_0-j} \label{eq:ew}
\end{equation} for all $ j \geq 0$. 
\end{remark}
  
It is shown that, in $\cite[\text{Corollary }7]{MR1665327},$ if $\mathfrak{c}_j(\mathfrak{n})$ is $J$-invariant for all $j \geq 0,$ then $J$ is nilpotent. We will provide a different approach to this.    
 
\begin{corollary}
\label{43.9}
Let $\mathfrak{n}$ be a step $k$ nilpotent Lie algebra with a complex structure $J.$ Suppose that all $\mathfrak{c}_j(\mathfrak{n})$ are $J$-invariant. Then $\mathfrak{p}_j = \mathfrak{c}_j(\mathfrak{n})$ for all $j \geq 0.$ Furthermore, $J$ is nilpotent of step $k$.
  \end{corollary}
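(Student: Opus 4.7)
The plan is to first establish the set-theoretic identity $\mathfrak{p}_j = \mathfrak{c}_j(\mathfrak{n})$ by induction on $j$, and then feed this into Theorem \ref{14} to conclude that $J$ is nilpotent of step $k$. The inclusion $\mathfrak{c}_j(\mathfrak{n}) \subseteq \mathfrak{p}_j$ is already available for free from Lemma \ref{p}, so only the reverse inclusion requires work, and this is exactly where the hypothesis that every $\mathfrak{c}_j(\mathfrak{n})$ is $J$-invariant comes in.

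For the induction, the base case $\mathfrak{p}_0 = \mathfrak{n} = \mathfrak{c}_0(\mathfrak{n})$ is immediate from Definition \ref{p3}. Assuming $\mathfrak{p}_{j-1} = \mathfrak{c}_{j-1}(\mathfrak{n})$, I would expand
\begin{equation*}
\mathfrak{p}_j = [\mathfrak{p}_{j-1},\mathfrak{n}] + [J\mathfrak{p}_{j-1},\mathfrak{n}] = [\mathfrak{c}_{j-1}(\mathfrak{n}),\mathfrak{n}] + [J\mathfrak{c}_{j-1}(\mathfrak{n}),\mathfrak{n}],
\end{equation*}
and then use the $J$-invariance hypothesis $J\mathfrak{c}_{j-1}(\mathfrak{n}) = \mathfrak{c}_{j-1}(\mathfrak{n})$ to collapse the second summand into the first, yielding $\mathfrak{p}_j = [\mathfrak{c}_{j-1}(\mathfrak{n}),\mathfrak{n}] = \mathfrak{c}_j(\mathfrak{n})$. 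This step is the only substantive content and is essentially a one-line calculation.

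For the second claim, since $\mathfrak{n}$ is step $k$ nilpotent, $\mathfrak{c}_k(\mathfrak{n}) = \{0\}$ and $\mathfrak{c}_{k-1}(\mathfrak{n}) \neq \{0\}$, so the identity just proved gives $\mathfrak{p}_k = \{0\}$ and $\mathfrak{p}_{k-1} \neq \{0\}$. Applying the equivalence (i) $\Leftrightarrow$ (ii) of Theorem \ref{14} then immediately yields that $J$ is nilpotent of step $k$. There is no real obstacle here — the $J$-invariance of the descending central series is exactly the extra input needed to make $\mathfrak{p}_j$ and $\mathfrak{c}_j(\mathfrak{n})$ coincide, and Theorem \ref{14} does the rest.
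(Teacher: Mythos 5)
Your proposal is correct and follows essentially the same route as the paper: both establish $\mathfrak{p}_j = \mathfrak{c}_j(\mathfrak{n})$ by induction (your write-up just makes explicit the one-line inductive step, using $J\mathfrak{c}_{j-1}(\mathfrak{n}) = \mathfrak{c}_{j-1}(\mathfrak{n})$ to merge the two summands in the definition of $\mathfrak{p}_j$), and then both invoke the equivalence (i) $\Leftrightarrow$ (ii) of Theorem \ref{14} with $\mathfrak{p}_k = \{0\}$ and $\mathfrak{p}_{k-1} \neq \{0\}$.
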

\begin{proof}
Since all $\mathfrak{c}_j(\mathfrak{n})$ are $J$-invariant, by definition, $\mathfrak{p}_0 = \mathfrak{n} = \mathfrak{c}_0(\mathfrak{n}).$ We have, by induction, that $ \mathfrak{p}_j = \mathfrak{c}_j(\mathfrak{n}) $ for all $j \geq 0.$ Therefore $\mathfrak{p}_k = \mathfrak{c}_k(\mathfrak{n}) = \{0\}$ and $\mathfrak{p}_{k-1} = \mathfrak{c}_{k-1}(\mathfrak{n}) \neq \{0\}$. By $\text{Theorem } \ref{14},$ $J$ is nilpotent of step $k.$
\end{proof}

\begin{corollary}
\label{uus}
Let $\mathfrak{n}$ be a step $k$ nilpotent Lie algebra with a nilpotent complex structure $J$ of step $k$. Suppose that $\mathfrak{c}_{k-1}(\mathfrak{n}) = \mathfrak{z}$. Then $\mathfrak{z}$ is $J$-invariant.
\end{corollary}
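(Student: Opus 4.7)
The plan is to deduce this immediately from the chain of inclusions recorded in Remark \ref{r35}, combined with the explicit description of $\mathfrak{d}^1$ given in Remark \ref{r7}(i). The point is that the hypothesis $\mathfrak{c}_{k-1}(\mathfrak{n}) = \mathfrak{z}$ forces the outermost and innermost objects in that chain to coincide in a way that directly yields $J$-invariance of the center.

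First I would invoke Remark \ref{r35} with $j_0 = k$ (since $J$ is nilpotent of step $k$) and with the index $j = k-1$. This gives
\begin{equation*}
\mathfrak{c}_{k-1}(\mathfrak{n}) + J\mathfrak{c}_{k-1}(\mathfrak{n}) \subseteq \mathfrak{d}^{k-(k-1)} = \mathfrak{d}^1.
\end{equation*}
By Remark \ref{r7}(i), the first term of the ascending $J$-invariant central series satisfies $\mathfrak{d}^1 = \mathfrak{z} \cap J\mathfrak{z}$, so the inclusion becomes
\begin{equation*}
\mathfrak{c}_{k-1}(\mathfrak{n}) + J\mathfrak{c}_{k-1}(\mathfrak{n}) \subseteq \mathfrak{z} \cap J\mathfrak{z}.
\end{equation*}

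Now I would plug in the hypothesis $\mathfrak{c}_{k-1}(\mathfrak{n}) = \mathfrak{z}$. The inclusion then reads $\mathfrak{z} + J\mathfrak{z} \subseteq \mathfrak{z} \cap J\mathfrak{z}$, which in particular gives $J\mathfrak{z} \subseteq \mathfrak{z} \cap J\mathfrak{z} \subseteq \mathfrak{z}$. Since $J\mathfrak{z} \subseteq \mathfrak{z}$, the center $\mathfrak{z}$ is $J$-invariant, as required.

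There is essentially no technical obstacle here: the entire content has been packaged into Remark \ref{r35}, and the only thing to notice is that the matching indices $j = k-1$ and $j_0 = k$ produce exactly $\mathfrak{d}^1$ on the right-hand side, which is already known to be the largest $J$-invariant subspace of $\mathfrak{z}$. The hypothesis $\mathfrak{c}_{k-1}(\mathfrak{n}) = \mathfrak{z}$ is precisely what is needed to replace the left-hand side by $\mathfrak{z} + J\mathfrak{z}$ and force the conclusion.
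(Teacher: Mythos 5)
Your proof is correct and follows essentially the same route as the paper: the paper also applies the chain of inclusions \eqref{eq:ew} from Remark \ref{r35} at index $j = k-1$ to get $\mathfrak{z} + J\mathfrak{z} \subseteq \mathfrak{d}_{k-1} \subseteq \mathfrak{d}^1 \subseteq \mathfrak{z}$ and concludes $J\mathfrak{z} = \mathfrak{z}$. Your only cosmetic difference is quoting $\mathfrak{d}^1 = \mathfrak{z} \cap J\mathfrak{z}$ from Remark \ref{r7}(i) rather than just $\mathfrak{d}^1 \subseteq \mathfrak{z}$, which changes nothing of substance.
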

 
\begin{proof}
Since $J$ is nilpotent of step $k,$ by $\eqref{eq:ew}$,   \begin{equation*}
       \mathfrak{z} + J \mathfrak{z} \subseteq \mathfrak{d}_{k-1} \subseteq \mathfrak{d}^1 \subseteq \mathfrak{z}\Rightarrow [ \mathfrak{z} + J \mathfrak{z} ,\mathfrak{n}] = \{0\}. 
\end{equation*}
Hence $J\mathfrak{z} = \mathfrak{z}.$  
\end{proof}

\begin{corollary}
\label{42.5}
Let $\mathfrak{n}$ be a Lie algebra with a nilpotent complex structure $J$ of step $j_0$. Then for all $j \geq 1,$ $\mathfrak{d}_{j_0-j}$ is not contained in $\mathfrak{d}^{j-1}$.
\end{corollary}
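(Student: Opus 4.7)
The plan is to argue by contradiction, propagating a hypothetical containment of the form $\mathfrak{d}_{j_0-j}\subseteq \mathfrak{d}^{j-1}$ downward to the impossible conclusion $\mathfrak{d}_{j_0-1}=\{0\}$. The whole argument is a single application of the shift induction that already appeared in the last paragraph of the proof of Theorem~\ref{14}, combined with the nontriviality of $\mathfrak{d}_{j_0-1}$ provided by the equivalence (i)$\Leftrightarrow$(iii) of that theorem.

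Concretely, I would fix $j\geq 1$, assume for contradiction that $\mathfrak{d}_{j_0-j}\subseteq \mathfrak{d}^{j-1}$, and then establish by induction on $s\in\{0,1,\dots,j-1\}$ the inclusion
\[
\mathfrak{d}_{j_0-j+s}\subseteq \mathfrak{d}^{j-1-s}.
\]
The base case $s=0$ is the standing assumption. For the inductive step I use the elementary observation that whenever $\mathfrak{d}_a\subseteq \mathfrak{d}^b$ with $b\geq 1$, the defining formula \eqref{eq:1} forces $[\mathfrak{d}^b,\mathfrak{n}]\subseteq \mathfrak{d}^{b-1}$, and the $J$-invariance of $\mathfrak{d}^{b-1}$ then gives $J[\mathfrak{d}^b,\mathfrak{n}]\subseteq \mathfrak{d}^{b-1}$; combining these with \eqref{eq:00} yields $\mathfrak{d}_{a+1}\subseteq \mathfrak{d}^{b-1}$. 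Applied to $a=j_0-j+s$ and $b=j-1-s$, this advances the induction by one.

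Setting $s=j-1$ in the claim produces $\mathfrak{d}_{j_0-1}\subseteq \mathfrak{d}^0=\{0\}$, which contradicts Theorem~\ref{14}(iii). Since the proof reduces to pure index bookkeeping plus an appeal to results already in hand, I do not foresee any genuine obstacle; the one thing to watch is that $b=j-1-s$ stays nonnegative throughout the induction, which is automatic from the range $0\leq s\leq j-1$.
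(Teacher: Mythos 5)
Your proof is correct and is essentially the paper's argument in contrapositive form: the paper inducts on $j$, showing at each step that a containment $\mathfrak{d}_{j_0-s}\subseteq\mathfrak{d}^{s-1}$ would force $\mathfrak{d}_{j_0-s+1}\subseteq\mathfrak{d}^{s-2}$, whereas you fix $j$ and propagate the hypothetical containment down to $\mathfrak{d}_{j_0-1}\subseteq\mathfrak{d}^0=\{0\}$, contradicting Theorem~\ref{14}(iii); the key inclusion $[\mathfrak{d}^b,\mathfrak{n}]+J[\mathfrak{d}^b,\mathfrak{n}]\subseteq\mathfrak{d}^{b-1}$ is identical in both. Your bookkeeping on the index $b=j-1-s$ is also fine, since the inductive step is only invoked for $s\leq j-2$.
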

 
\begin{proof}
Since $J$ is nilpotent of step $j_0,$ by $\text{Theorem }\ref{14},$ $\mathfrak{d}_{j_0-1} \neq \{0\} = \mathfrak{d}^0$. Hence $\mathfrak{d}_{j_0-1}$ is not contained in $\mathfrak{d}^0 .$  Next, suppose that $\mathfrak{d}_{j_0-s+1}$ is not contained in $\mathfrak{d}^{s-2}$ for some $\mathbb{N} \ni s \geq 2.$ We show that $\mathfrak{d}_{j_0-s}$ is not contained in $\mathfrak{d}^{s-1}.$ Suppose not. That is, $\mathfrak{d}_{j_0-s} \subseteq \mathfrak{d}^{s-1}.$ Then \begin{align*}
    \mathfrak{d}_{j_0-s+1}  & = [\mathfrak{d}_{j_0-s},\mathfrak{n}] + J[\mathfrak{d}_{j_0-s},\mathfrak{n}] \\ 
    & \subseteq [\mathfrak{d}^{s-1},\mathfrak{n}]+J[\mathfrak{d}^{s-1},\mathfrak{n}] \subseteq \mathfrak{d}^{s-2}.
\end{align*}  It follows that $\mathfrak{d}_{j_0-s+1} \subseteq \mathfrak{d}^{s-2}.$ This is a contradiction. Hence $\mathfrak{d}_{j_0-s}$ is not contained in $ \mathfrak{d}^{s-1}.$ By induction,  for all $j \geq 1,$ $\mathfrak{d}_{j_0-j}$ is not contain in $\mathfrak{d}^{j-1}$. 
\end{proof}

We investigate the possible range of $\dim \mathfrak{z}$ for a Lie algebra $\mathfrak{n}$ with a nilpotent complex structure $J.$ 

\begin{proposition}
\label{16}
Let $\mathfrak{n}$ be a non-Abelian Lie algebra of dimension $2n$ with a nilpotent complex structure $J.$ Then $ 2 \leq  \dim \mathfrak{z}  \leq 2n-2.$
\end{proposition}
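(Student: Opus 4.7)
The plan is to handle the two inequalities separately, as they rely on quite different facts; the proof should be short because the hard structural work is already packaged into Remark \ref{r7}(i).

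For the lower bound, I would invoke the observation from Remark \ref{r7}(i) that $\mathfrak{d}^1 = \mathfrak{z} \cap J\mathfrak{z}$ and that the nilpotency of $J$ forces $\mathfrak{d}^1 \neq \{0\}$. By construction $\mathfrak{d}^1$ is $J$-invariant, so it carries a complex structure and its real dimension is even; being nonzero, $\dim \mathfrak{d}^1 \geq 2$. Since $\mathfrak{d}^1 \subseteq \mathfrak{z}$, this yields $\dim \mathfrak{z} \geq 2$.

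For the upper bound, the complex structure plays no role: it is a purely Lie-algebraic fact that a non-Abelian Lie algebra $\mathfrak{n}$ of dimension $2n$ satisfies $\dim \mathfrak{z} \leq 2n-2$. Since $\mathfrak{n}$ is non-Abelian, $\mathfrak{z} \subsetneq \mathfrak{n}$, so $\dim \mathfrak{z} \leq 2n-1$. Suppose for contradiction that $\dim \mathfrak{z} = 2n-1$. Then $\mathfrak{n}/\mathfrak{z}$ is one-dimensional, so any $X \in \mathfrak{n} \setminus \mathfrak{z}$ spans it; every $Y \in \mathfrak{n}$ can be written $Y = \alpha X + Z$ with $\alpha \in \mathbb{R}$ and $Z \in \mathfrak{z}$, giving $[X,Y] = \alpha[X,X] + [X,Z] = 0$. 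Hence $X \in \mathfrak{z}$, a contradiction. Therefore $\dim \mathfrak{z} \leq 2n-2$.

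I do not anticipate a genuine obstacle in this argument: the lower bound is essentially a corollary of the existence of a nonzero $J$-invariant subspace of the center (already established), and the upper bound is a standard one-line argument about codimension-one centers. The only point worth stating explicitly is the evenness of $\dim \mathfrak{d}^1$, which is what upgrades the weaker bound $\dim \mathfrak{z} \geq 1$ to $\dim \mathfrak{z} \geq 2$.
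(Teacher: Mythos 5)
Your proof is correct and follows essentially the same route as the paper: the lower bound comes from $\mathfrak{d}^1 = \mathfrak{z}\cap J\mathfrak{z}$ being a nonzero $J$-invariant (hence even-dimensional) subspace of $\mathfrak{z}$, and the upper bound is the standard purely Lie-algebraic fact that a non-Abelian Lie algebra cannot have a center of codimension $\leq 1$. The only cosmetic difference is in the upper bound: the paper exhibits $X,Y$ with $[X,Y]\neq 0$ and notes $\mathrm{span}\{X,Y\}\cap\mathfrak{z}=\{0\}$, whereas you rule out a codimension-one center directly; the two arguments are interchangeable.
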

 
\begin{proof}
Recall that $ \mathfrak{d}^1 = \mathfrak{z} \cap J \mathfrak{z}$, which is the largest $J$-invariant subspace of $\mathfrak{z}$. Since $J$ is nilpotent, it is clear that $ \mathfrak{d}^1 \neq \{0\}.$ Furthermore, since $ \mathfrak{d}^1$ is $J$-invariant, it follows that $  2 \leq  \dim  \mathfrak{d}^1 \leq  \dim \mathfrak{z}.$ Then the lower bound of $\dim \mathfrak{z}$ is $2.$

Next, we show that the upper bound of $\dim\mathfrak{z}$ is $2n-2.$ Since $\mathfrak{n}$ is non-Abelian, it is possible to find $X,Y \in \mathfrak{n}$ such that $0 \neq [X,Y] \in \mathfrak{c}_1(\mathfrak{n}).$ Then $ \mathrm{span}  \{X,Y\}$ is $2$-dimensional and $ \mathrm{span}  \{X,Y\} \cap \mathfrak{z} = \{0\}.$ Hence $\dim \mathfrak{z} \leq 2n -2.$ 

In conclusion, $2 \leq  \dim \mathfrak{z} \leq 2n  - 2.$  
\end{proof}

\begin{remark}
From Proposition $ \ref{16},$ we can further conclude that if $\dim \mathfrak{z} = 1,$ then the complex structure $J$ on $\mathfrak{n}$ is non-nilpotent. In particular, the Lie algebra of $n \times n$ upper triangular matrices does not admit a nilpotent complex structure.
\end{remark} 
 
\section{Stratified Lie algebras with complex structures}
 
In this section we consider a special type of nilpotent Lie algebras: $\textit{stratified Lie}$ $\textit{algebra}$. Recent results on nilpotent Lie algebras with a stratification can be found in $\cite{MR4127910},\cite{MR3521656}, \cite{MR3742567}.$ We start with the definition of stratified Lie algebras. 

\begin{definition}
A nilpotent Lie algebra $\mathfrak{n}$ is said to admit a $\textit{step k stratification}$ if it has a vector space decomposition of the form $\mathfrak{n}_1  \oplus \mathfrak{n}_2 \oplus \ldots \oplus \mathfrak{n}_k,$ where $\mathfrak{n}_k \neq \{0\},$ satisfying the bracket generating property $[\mathfrak{n}_1,\mathfrak{n}_k] = \{0\}$ and \begin{align*}
    [\mathfrak{n}_1 ,\mathfrak{n}_{j-1}] = \mathfrak{n}_j \qquad  \text{ for all } j \in  \{2,\ldots,k\} .
\end{align*}  A Lie algebra $\mathfrak{n}$ that admits a stratification is called a $\textit{stratified Lie algebra}.$  A complex structure $J$ on a stratified Lie algebra $\mathfrak{n}$ is said to be $\textit{strata-preserving}$ if it preserves each layer of the stratification.
\end{definition}

\begin{remark}
\label{rs}
Let $\mathfrak{n}$ be a step $k$ stratified Lie algebra. By induction, \begin{equation}
     \mathfrak{c}_j(\mathfrak{n}) =\bigoplus_{ j+1\leq l \leq k}  \mathfrak{n}_l \qquad  \text{ for all }  j \geq 0.  \label{eq:e}  
\end{equation} 
\end{remark}

\begin{proposition}
\label{abc}
Let $\mathfrak{n}$ be a $2n$-dimensional step $n$ nilpotent Lie algebra for some $n \in \mathbb{N}$. Suppose that $\dim  \mathfrak{c}_j(\mathfrak{n}) = 2n-2j$ for all $1 \leq j \leq n.$ Then $\mathfrak{n}$ does not admit a stratification.
\end{proposition}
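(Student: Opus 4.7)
The plan is to argue by contradiction using Remark \ref{rs}. Suppose that $\mathfrak{n}$ admits a stratification $\mathfrak{n} = \mathfrak{n}_1 \oplus \cdots \oplus \mathfrak{n}_k$. By Remark \ref{rs}, $\mathfrak{c}_{k-1}(\mathfrak{n}) = \mathfrak{n}_k \neq \{0\}$ and $\mathfrak{c}_k(\mathfrak{n}) = \{0\}$, so the number of layers must equal the nilpotency step; that is, $k = n$.

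Next, I would recover the dimensions of the individual layers from the dimensional hypothesis. Remark \ref{rs} gives $\dim \mathfrak{c}_j(\mathfrak{n}) = \sum_{l=j+1}^n \dim \mathfrak{n}_l$, so successive differences yield
\[
\dim \mathfrak{n}_{j+1} \;=\; \dim \mathfrak{c}_j(\mathfrak{n}) - \dim \mathfrak{c}_{j+1}(\mathfrak{n}) \;=\; 2 \qquad \text{for } j = 1, \ldots, n-1,
\]
while $\dim \mathfrak{n}_1 = 2n - \sum_{l=2}^n \dim \mathfrak{n}_l = 2n - 2(n-1) = 2$. Hence every stratum $\mathfrak{n}_l$ is exactly $2$-dimensional.

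The contradiction then comes from the bracket generating condition $[\mathfrak{n}_1, \mathfrak{n}_1] = \mathfrak{n}_2$ (the $j = 2$ case of the definition of stratification). If $\{e_1, e_2\}$ is any basis of $\mathfrak{n}_1$, then by antisymmetry and bilinearity the subspace $[\mathfrak{n}_1, \mathfrak{n}_1]$ is spanned by the single vector $[e_1, e_2]$, so $\dim \mathfrak{n}_2 \leq 1$, contradicting $\dim \mathfrak{n}_2 = 2$.

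There is no real obstacle here; the argument is essentially a dimension count once Remark \ref{rs} is in hand. The only subtlety is the tacit assumption $n \geq 2$, which is what makes $\mathfrak{n}_2$ appear as an actual stratum where the contradiction is forced; for $n = 1$ the Lie algebra is just the $2$-dimensional Abelian one and the trivial one-layer decomposition $\mathfrak{n} = \mathfrak{n}_1$ is admissible, so the statement should be read with $n \geq 2$ implicit.
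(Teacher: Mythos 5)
Your proof is correct and follows essentially the same route as the paper: both use Remark \ref{rs} to deduce $\dim \mathfrak{n}_1 = 2$ from $\dim \mathfrak{c}_1(\mathfrak{n}) = 2n-2$, and then derive a contradiction from $\mathfrak{n}_2 = [\mathfrak{n}_1,\mathfrak{n}_1]$ having dimension at most $1$ while the hypothesis forces $\dim \mathfrak{n}_2 = 2$. Your explicit remark that the statement tacitly assumes $n \geq 2$ is a correct and worthwhile observation that the paper omits.
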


\begin{proof}
Assume, by contradiction, that $\mathfrak{n}$ admits a stratification. Since, by $\eqref{eq:e}$, $ \mathfrak{c}_j(\mathfrak{n})=\bigoplus_{j+1 \leq l \leq n}$   $ \mathfrak{n}_l$  and  $\dim\mathfrak{c}_1(\mathfrak{n}) = 2n-2$, $\dim \mathfrak{n}_1 =2.$ Since $\mathfrak{n}$ is a stratified Lie algebra, $\mathfrak{n}_2 = [\mathfrak{n}_1,\mathfrak{n}_1].$ Thus $\dim \mathfrak{n}_2 = 1$ and $\dim\mathfrak{c}_2(\mathfrak{n}) = 2n-3 > 2n-4.$ This is a contradiction.  
\end{proof} 

\begin{proposition}
\label{3.4}
Let $\mathfrak{n}$ be a step $k$ stratified Lie algebra with a complex structure $J$ and $k \geq 2$. Suppose that $\dim \mathfrak{n}_1 =2.$ Then $J$ is not strata-preserving.
\end{proposition}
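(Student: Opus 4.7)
The plan is to derive a contradiction by assuming $J$ is strata-preserving, and to reduce the question to a parity argument on $\dim \mathfrak{n}_2$. The key observation is that a strata-preserving complex structure restricts to a complex structure on each individual layer $\mathfrak{n}_j$, forcing each $\dim \mathfrak{n}_j$ to be even. I will then show that the hypothesis $\dim \mathfrak{n}_1 = 2$ forces $\dim \mathfrak{n}_2 = 1$, which is odd and yields the contradiction.

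First I would assume, for contradiction, that $J(\mathfrak{n}_j) = \mathfrak{n}_j$ for every $1 \le j \le k$. Since $J^2 = -I$ on $\mathfrak{n}$, the restriction $J|_{\mathfrak{n}_j}$ is an endomorphism of $\mathfrak{n}_j$ satisfying $(J|_{\mathfrak{n}_j})^2 = -I|_{\mathfrak{n}_j}$, hence a complex structure on the real vector space $\mathfrak{n}_j$. This forces $\dim \mathfrak{n}_j$ to be even for every $j \in \{1, \ldots, k\}$.

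Next, I would apply the bracket generating identity $\mathfrak{n}_2 = [\mathfrak{n}_1, \mathfrak{n}_1]$ coming from the definition of a stratification. Fixing a basis $\{e_1, e_2\}$ of $\mathfrak{n}_1$, the subspace $[\mathfrak{n}_1, \mathfrak{n}_1]$ is spanned by the single vector $[e_1, e_2]$, so $\dim \mathfrak{n}_2 \le 1$. Moreover, $\mathfrak{n}_2 \neq \{0\}$: if $\mathfrak{n}_2$ were zero, iterating $\mathfrak{n}_j = [\mathfrak{n}_1, \mathfrak{n}_{j-1}]$ would give $\mathfrak{n}_j = \{0\}$ for all $j \ge 2$, in particular $\mathfrak{n}_k = \{0\}$, contradicting the definition of step $k$ together with $k \ge 2$. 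Hence $\dim \mathfrak{n}_2 = 1$, which is odd, contradicting the evenness conclusion of the previous paragraph.

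The main obstacle is essentially non-existent: the whole argument reduces to the parity constraint on invariant subspaces of a complex structure, combined with the rank bound $\dim [\mathfrak{n}_1, \mathfrak{n}_1] \le \binom{\dim \mathfrak{n}_1}{2} = 1$. No induction on $k$ or appeal to the $J$-invariant central series machinery developed earlier in the paper is required.
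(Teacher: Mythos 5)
Your proof is correct and follows essentially the same route as the paper: assume $J$ preserves each layer, deduce that every $\dim\mathfrak{n}_j$ is even, and contradict this with $\dim\mathfrak{n}_2=1$. You actually supply more detail than the paper does at the one nontrivial point, namely why $\dim\mathfrak{n}_1=2$ forces $\dim\mathfrak{n}_2=1$ (the span-of-$[e_1,e_2]$ bound together with $\mathfrak{n}_2\neq\{0\}$ from $k\geq 2$).
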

\begin{proof}
Suppose, by contradiction, that there exists a strata-preserving complex structure $J.$ Then $\dim \mathfrak{n}_j \in 2\mathbb{N}$ for all $j \geq 1.$ However, $\dim \mathfrak{n}_1= 2 $ implies that $\dim \mathfrak{n}_2 =1,$ which contradicts the assumption that $\dim \mathfrak{n}_2 \in 2 \mathbb{N}.$ Hence $\mathfrak{n}$ does not have a strata-preserving complex structure.
\end{proof}

\begin{remark}
Let $\mathfrak{n}$ be a step $3$ stratified Lie algebra with a strata-preserving complex structure. Arguing in a similar way as in Proposition $\ref{3.4}$, we conclude that $\dim \mathfrak{n} \neq 4$ or $6.$
\end{remark}

We show that there always exists a stratification on step $2$ nilpotent Lie algebra with a strata-preserving complex structure $J$. 
 
\begin{theorem}
\label{5}
Let $\mathfrak{n}$ be a step $2$ nilpotent Lie algebra with a complex structure $J.$ Suppose that $\mathfrak{c}_1(\mathfrak{n})$ is $J$-invariant. Then $\mathfrak{n}$ admits a $J$-invariant stratification.
\end{theorem}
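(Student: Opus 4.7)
The plan is to take the obvious candidate for the second stratum, namely $\mathfrak{n}_2 := \mathfrak{c}_1(\mathfrak{n}) = [\mathfrak{n},\mathfrak{n}]$, which is $J$-invariant by hypothesis, and then produce a $J$-invariant complement for it to serve as $\mathfrak{n}_1$.

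First, I would exploit the fact that $J$ turns the real vector space $\mathfrak{n}$ into a complex vector space (with $i$ acting as $J$), and that under this complex structure the $J$-invariant real subspaces are exactly the complex subspaces. Hence $\mathfrak{n}_2$ is a complex subspace of $\mathfrak{n}$, and so admits a complex direct summand: concretely, one can fix a $J$-compatible (Hermitian) inner product on $\mathfrak{n}$ and take $\mathfrak{n}_1$ to be the orthogonal complement of $\mathfrak{n}_2$, which is automatically $J$-invariant. This yields a direct sum decomposition $\mathfrak{n} = \mathfrak{n}_1 \oplus \mathfrak{n}_2$ of $J$-invariant subspaces.

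Next, I would verify the bracket relations needed for a step $2$ stratification. Since $\mathfrak{n}$ is step $2$, $\mathfrak{c}_2(\mathfrak{n}) = [\mathfrak{n},[\mathfrak{n},\mathfrak{n}]] = \{0\}$, so both $[\mathfrak{n}_1,\mathfrak{n}_2]$ and $[\mathfrak{n}_2,\mathfrak{n}_2]$ vanish; in particular, the condition $[\mathfrak{n}_1,\mathfrak{n}_2] = \{0\}$ (the top-stratum vanishing) is immediate. Expanding $[\mathfrak{n},\mathfrak{n}] = [\mathfrak{n}_1 + \mathfrak{n}_2,\mathfrak{n}_1 + \mathfrak{n}_2]$ and discarding the vanishing terms gives $[\mathfrak{n}_1,\mathfrak{n}_1] = [\mathfrak{n},\mathfrak{n}] = \mathfrak{n}_2$, which is precisely the remaining condition $[\mathfrak{n}_1,\mathfrak{n}_{j-1}] = \mathfrak{n}_j$ for $j=2$. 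Combined with the $J$-invariance of both $\mathfrak{n}_1$ and $\mathfrak{n}_2$, this produces the required $J$-invariant stratification.

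There is not really a serious obstacle here; the only subtle point is the existence of a $J$-invariant linear complement to $\mathfrak{n}_2$, which one must be careful to justify via the complex vector space structure (or equivalently by averaging a real projection against $J$) rather than just picking an arbitrary real complement, since a generic complement will not be $J$-stable. Once that is in hand, the rest follows mechanically from the step $2$ hypothesis.
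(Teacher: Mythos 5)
Your proposal is correct and follows essentially the same route as the paper: take $\mathfrak{n}_2 = \mathfrak{c}_1(\mathfrak{n})$, build a $J$-invariant inner product (the paper uses $\psi(X,Y)=\phi(X,Y)+\phi(JX,JY)$, which is exactly the averaging you allude to), let $\mathfrak{n}_1 = \mathfrak{n}_2^{\perp}$, and use the step $2$ hypothesis to get $[\mathfrak{n}_1,\mathfrak{n}_1]=[\mathfrak{n},\mathfrak{n}]=\mathfrak{n}_2$. Your write-up is in fact slightly more explicit than the paper's about why $[\mathfrak{n}_1,\mathfrak{n}_2]=\{0\}$ and why a generic real complement would not suffice.
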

 
\begin{proof}
Define a $J$-invariant inner product $\psi$ by \begin{align*}
     \psi (X,Y) =  \phi(X,Y) + \phi(JX ,JY)  , \text{ for all } X,Y \in \mathfrak{n},
\end{align*} where $\phi$ is an inner product on $\mathfrak{n}.$ We show that there exists a stratification on $\mathfrak{n}$ such that $\mathfrak{n}_1$ and $\mathfrak{n}_2$ are $J$-invariant. Define $\mathfrak{n}_2 = [\mathfrak{n},\mathfrak{n}]$ and $ \mathfrak{n}_1   = \mathfrak{n}_2^\perp$, the orthogonal complement of $ \mathfrak{n}_2$ with respect to $\psi$. Then $\mathfrak{n}_2 = \mathfrak{c}_1(\mathfrak{n})$ is $J$-invariant and by definition $\mathfrak{n}= \mathfrak{n}_1 \oplus \mathfrak{n}_2$. Also note that $$ \mathfrak{n}_2 = [\mathfrak{n}_1\oplus \mathfrak{n}_2,\mathfrak{n}_1\oplus\mathfrak{n}_2] = [\mathfrak{n}_1,\mathfrak{n}_1].  $$ This implies that $\mathfrak{n}_1$ generates $\mathfrak{n}.$ Thus $J$ is a complex structure that preserves both $\mathfrak{n}_1$ and $\mathfrak{n}_2$.
\end{proof}

\begin{remark}
(i) Let $\mathfrak{g}$ be an arbitrary Lie algebra. A complex structure $J$ on $\mathfrak{g}$ is called $\textit{bi-invariant}$ if $J[X,Y] = [JX,Y]$ for all $X,Y \in \mathfrak{g}.$ That is, $J \circ \ad = \ad \circ J.$ A complex structure $J$ is called $\textit{Abelian}$ if $[X,Y] = [JX,JY]$ for all $X,Y \in \mathfrak{g}.$ See, e.g.,  $\cite{MR2040168}$, $\cite{MR2070596}$. Notice that $J$ preserves all terms of $\mathfrak{c}_j(\mathfrak{n})$ and $\mathfrak{c}^j(\mathfrak{n})$ if $J$ is bi-invariant, while if $J$ is Abelian, $J$ only preserves all terms of $\mathfrak{c}^j(\mathfrak{n}).$ 

(ii) Suppose that $\mathfrak{n}$ is a step $k$ stratified Lie algebra with a bi-invariant complex structure $J.$ From $\eqref{eq:e}$, $ \mathfrak{c}_j(\mathfrak{n}) = \bigoplus_{j+1 \leq l \leq k} \mathfrak{n}_l$, it is clear that $\dim \mathfrak{n}_j \in 2 \mathbb{N}$ for all $ j \in \{1,\ldots,k\}.$ 
\end{remark}

\begin{proposition}
\label{44}
Let $\mathfrak{n}$ be a step $k$ stratified Lie algebra with a strata-preserving complex structure $J.$ Then $J\mathfrak{c}_j(\mathfrak{n}) = \mathfrak{c}_j(\mathfrak{n})$ for all $j \geq 0 $ and $J$ is nilpotent of step $k$.
\end{proposition}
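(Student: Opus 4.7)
The proof is essentially a direct assembly of earlier results, so the plan is short. My strategy is to first upgrade the strata-preserving hypothesis to $J$-invariance of the whole descending central series via the stratification formula, and then invoke Corollary \ref{43.9} for the nilpotency claim.

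First I would unpack the stratification. By Remark \ref{rs}, one has $\mathfrak{c}_j(\mathfrak{n}) = \bigoplus_{j+1 \leq l \leq k} \mathfrak{n}_l$ for every $j \geq 0$. Since $J$ is strata-preserving, $J\mathfrak{n}_l = \mathfrak{n}_l$ for each $l \in \{1,\ldots,k\}$, so $J$ restricts to an automorphism of each summand $\mathfrak{n}_l$. A direct sum of $J$-invariant subspaces is $J$-invariant, hence
\begin{equation*}
J\mathfrak{c}_j(\mathfrak{n}) = \bigoplus_{j+1 \leq l \leq k} J\mathfrak{n}_l = \bigoplus_{j+1 \leq l \leq k} \mathfrak{n}_l = \mathfrak{c}_j(\mathfrak{n})
\end{equation*}
for every $j \geq 0$. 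This establishes the first assertion.

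For the second assertion, I would simply appeal to Corollary \ref{43.9}: the hypotheses of that corollary are exactly that $\mathfrak{n}$ is step $k$ nilpotent with a complex structure $J$ such that every $\mathfrak{c}_j(\mathfrak{n})$ is $J$-invariant, which we have just verified. The corollary then yields that $J$ is nilpotent of step $k$, completing the proof.

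There is no real obstacle here; the only thing worth a sanity check is that the stratification indeed identifies $\mathfrak{c}_j(\mathfrak{n})$ with the tail $\bigoplus_{l > j} \mathfrak{n}_l$ (the content of Remark \ref{rs}), after which both conclusions follow formally.
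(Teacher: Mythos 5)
Your proof is correct and takes essentially the same route as the paper's: both use the stratification identity $\mathfrak{c}_j(\mathfrak{n}) = \bigoplus_{j+1 \leq l \leq k} \mathfrak{n}_l$ from Remark \ref{rs} to deduce $J$-invariance of each $\mathfrak{c}_j(\mathfrak{n})$ from strata-preservation, and then invoke Corollary \ref{43.9} for the nilpotency of $J$. Your write-up is in fact slightly more explicit about why the direct sum of $J$-invariant layers is $J$-invariant, but the argument is the same.
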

 
\begin{proof}
We first show that $J\mathfrak{c}_j(\mathfrak{n}) = \mathfrak{c}_j(\mathfrak{n})$ for all $j \geq 0.$ Recall, from $\eqref{eq:1},$ that $\mathfrak{c}_j(\mathfrak{n}) $   $= \bigoplus_{j+1 \leq l \leq k} \mathfrak{n}_l$ and hence $ J\mathfrak{c}_j(\mathfrak{n}) =\mathfrak{c}_j(\mathfrak{n}) $ for all $j \geq 0.$ By $\text{Corollary }\ref{43.9}$, $J$ is nilpotent of step $k.$
 \end{proof}

It is known that every step $2$ nilpotent Lie algebra maybe stratified (see, e.g., $\cite{MR3742567}$). We will provide another proof in Theorem $\ref{47}$, that every complex structure on a step $2$ nilpotent Lie algebra is nilpotent of step $2$ or $3$. See, e.g.,  $ \cite[\text{Theorem } 1.3]{gao2020maximal}$ and $\cite[\text{Proposition }3.3]{MR2533671}.$ In what follows, we denote by $\mathfrak{k} = \mathfrak{n}_2 \cap J\mathfrak{n}_2$ the largest $J$-invariant subspace contained in $\mathfrak{n}_2$ and we also remind the reader that $\mathfrak{d}^1 = \mathfrak{z} \cap J\mathfrak{z} $ is the largest $J$-invariant subspace contained in $\mathfrak{z}.$ 
 
\begin{theorem}
\label{47}
Let $ \mathfrak{n} = \mathfrak{n}_1 \oplus \mathfrak{n}_2$ be a step $2$ nilpotent Lie algebra with a complex structure $J$ and a $J$-invariant inner product $\psi$.

(i) Suppose that $\mathfrak{k} = \{0\}$. Then $ \mathfrak{d}_1$ is Abelian and $J$ is nilpotent of step $2.$
 
(ii) Suppose that $  \{0\} \neq \mathfrak{k} \subset \mathfrak{n}_2$. Then $J$ is nilpotent of step $3.$

(iii) Suppose that $\mathfrak{n}_2 = \mathfrak{k} .$ Then $J$ is strata-preserving and nilpotent of step $2.$

In conclusion, $J$ is nilpotent of either step $2$ or $3$.
 
\end{theorem}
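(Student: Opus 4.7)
The plan is to control $J$ through its $J$-invariant descending series $\mathfrak{d}_j$ and then invoke Theorem~\ref{14}. Step $2$ gives $\mathfrak{n}_2=[\mathfrak{n},\mathfrak{n}]\subseteq\mathfrak{z}$, and since brackets against $\mathfrak{n}_2$ vanish this simplifies $\mathfrak{d}_1=\mathfrak{n}_2+J\mathfrak{n}_2$ and $\mathfrak{d}_2=[J\mathfrak{n}_2,\mathfrak{n}]+J[J\mathfrak{n}_2,\mathfrak{n}]$. The central Newlander--Nirenberg calculation applies \eqref{eq:Niv} to $X\in\mathfrak{n}_2$, $Y\in\mathfrak{n}$: the vanishing of $[X,Y]$ and $[X,JY]$ collapses the identity to $[JX,JY]=J[JX,Y]$. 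This forces $[JX,JY]\in\mathfrak{n}_2\cap J\mathfrak{n}_2=\mathfrak{k}$, and rearranging gives $[JX,Y]=-J[JX,JY]\in J\mathfrak{k}=\mathfrak{k}$. Hence $[J\mathfrak{n}_2,\mathfrak{n}]\subseteq\mathfrak{k}$, so $\mathfrak{d}_2\subseteq\mathfrak{k}\subseteq\mathfrak{z}$ and $\mathfrak{d}_3=\{0\}$ in all three cases. Combined with $\mathfrak{d}_1\neq\{0\}$, Theorem~\ref{14} already secures the closing line of the theorem: $J$ is nilpotent of step $2$ or $3$.

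\textbf{Parts (i) and (iii).} Part (iii) is essentially immediate: $\mathfrak{n}_2=\mathfrak{k}$ is $J$-invariant, so Theorem~\ref{5} promotes the stratification to a $J$-invariant one, $J$ is strata-preserving, and Proposition~\ref{44} delivers nilpotency of step $k=2$. For part (i), $\mathfrak{k}=\{0\}$ gives $\mathfrak{d}_2\subseteq\mathfrak{k}=\{0\}$ immediately. Applying the same NN identity with both arguments in $\mathfrak{n}_2$ yields $[J\mathfrak{n}_2,J\mathfrak{n}_2]\subseteq\mathfrak{k}=\{0\}$; together with $[\mathfrak{n}_2,\mathfrak{n}_2]=\{0\}$ this shows $[\mathfrak{d}_1,\mathfrak{d}_1]=\{0\}$, so $\mathfrak{d}_1$ is Abelian. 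Since $\mathfrak{d}_1\neq\{0\}$, Theorem~\ref{14} pins the step at $2$.

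\textbf{Part (ii) and the main obstacle.} Because $\mathfrak{d}_3=\{0\}$ was established unconditionally, all that remains for (ii) is to exclude the step-$2$ alternative, i.e.\ to show $\mathfrak{d}_2\neq\{0\}$, or equivalently $J\mathfrak{n}_2\not\subseteq\mathfrak{z}$. This is the hard part; merely knowing $\{0\}\neq\mathfrak{k}\subsetneq\mathfrak{n}_2$ does not obviously produce a noncentral element of $J\mathfrak{n}_2$, since a priori $\mathfrak{n}_1$ could meet $\mathfrak{z}$ nontrivially. The plan is to work with the $\psi$-orthogonal stratification $\mathfrak{n}_1=\mathfrak{n}_2^{\perp_\psi}$ and pick $X\in\mathfrak{n}_2\setminus\mathfrak{k}$ (available because $\mathfrak{k}\subsetneq\mathfrak{n}_2$), so $JX\notin\mathfrak{n}_2$ and its $\mathfrak{n}_1$-component $V$ is nonzero. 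Establishing $\mathfrak{d}_2\neq\{0\}$ reduces to showing $V\notin\mathfrak{z}$, and I would attempt this by using the $J$-invariance of $\psi$ together with the existence of a nonzero $Z\in\mathfrak{k}$: feeding pairs such as $(X,Z)$ or $(V,JZ)$ into further applications of Newlander--Nirenberg should detect the asymmetry between $\mathfrak{k}$ and $\mathfrak{n}_2\setminus\mathfrak{k}$ and produce the nonvanishing bracket required. This last step is where I expect the real work to lie, as it is the only point where the orthogonal structure provided by $\psi$ and the strict inclusion $\mathfrak{k}\subsetneq\mathfrak{n}_2$ must genuinely combine.
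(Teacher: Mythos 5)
Your reduction is sound, and for parts (i), (iii) and the closing ``step $2$ or $3$'' claim it is essentially the paper's own argument: the paper runs the same Newlander--Nirenberg computation (phrased with the series $\mathfrak{p}_j$ rather than $\mathfrak{d}_j$) to conclude that $[J\mathfrak{n}_2,\mathfrak{n}]$ is a $J$-invariant subspace of $\mathfrak{n}_2$, hence contained in $\mathfrak{k}$, and then parts (i) and (iii) follow exactly as you describe. The one step you leave open --- showing $\mathfrak{d}_2\neq\{0\}$ in part (ii) --- is indeed the crux, and you should know that the paper does not close it either: its proof of (ii) simply asserts ``$\{0\}\neq\mathfrak{p}_2\subseteq\mathfrak{k}$'' without justifying the nonvanishing.

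More importantly, the further work you propose (extracting a noncentral component of $JX$ from $\psi$-orthogonality and more applications of \eqref{eq:Niv}) cannot succeed, because the implication in (ii) is false as stated. Take $\mathfrak{n}$ of dimension $8$ with basis $X_1,X_2,X_3,X_4,Z_1,Z_2,Z_3,W$, nonzero brackets
\begin{equation*}
[X_1,X_2]=[X_3,X_4]=Z_3,\qquad [X_1,X_3]=[X_2,X_4]=Z_1,\qquad [X_1,X_4]=-[X_2,X_3]=Z_2,
\end{equation*}
and $JX_1=X_2$, $JX_3=X_4$, $JZ_1=Z_2$, $JZ_3=W$, extended by $J^2=-I$. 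The Nijenhuis tensor vanishes on every basis pair: among the $X_i$ the only constraint is $J\bigl([X_1,X_4]+[X_2,X_3]\bigr)=[X_2,X_4]-[X_1,X_3]$, which here reads $J(0)=0$, and every other pair involves an element of $\mathfrak{z}=\mathrm{span}\{Z_1,Z_2,Z_3,W\}$, which is $J$-invariant, so those terms vanish identically. Then $\mathfrak{n}_2=[\mathfrak{n},\mathfrak{n}]=\mathrm{span}\{Z_1,Z_2,Z_3\}$ and $\mathfrak{k}=\mathfrak{n}_2\cap J\mathfrak{n}_2=\mathrm{span}\{Z_1,Z_2\}$, so the hypothesis $\{0\}\neq\mathfrak{k}\subsetneq\mathfrak{n}_2$ of part (ii) holds; yet $J\mathfrak{n}_2\subseteq\mathfrak{z}$, so $\mathfrak{d}_2=[J\mathfrak{n}_2,\mathfrak{n}]+J[J\mathfrak{n}_2,\mathfrak{n}]=\{0\}$ and $J$ is nilpotent of step $2$, not $3$. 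What your computation actually proves is the correct dichotomy: $J$ always has nilpotency step at most $3$, with step exactly $2$ precisely when $J[\mathfrak{n},\mathfrak{n}]\subseteq\mathfrak{z}$ (consistent with Table \ref{2.1}); the trichotomy on $\mathfrak{k}$ does not detect this. So rather than hunting for the missing nonvanishing argument, replace the hypothesis of (ii) by $J\mathfrak{n}_2\not\subseteq\mathfrak{z}$, under which $\mathfrak{d}_2\neq\{0\}$ is immediate and your bound $\mathfrak{d}_2\subseteq\mathfrak{k}\subseteq\mathfrak{z}$ gives $\mathfrak{d}_3=\{0\}$ and step exactly $3$ via Theorem \ref{14}.
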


\begin{proof}
We start with parts (i) and (ii) together. Suppose that $J\mathfrak{n}_2 \neq \mathfrak{n}_2.$ Then, $\mathfrak{p}_2 = [J\mathfrak{n}_2,\mathfrak{n}] \subseteq \mathfrak{n}_2.$  For all $Z_2 \in \mathfrak{n}_2,$ $X,JX \in \mathfrak{n},$ by the Newlander--Nirenberg condition, \begin{equation}
    [J\mathfrak{n}_2,\mathfrak{n}] \ni [JZ_2,JX] = J[JZ_2,X] \in J[J\mathfrak{n}_2,\mathfrak{n}]. \label{eq:iu}
\end{equation}
    
 This implies that $\mathfrak{p}_2$ is $J$-invariant in $\mathfrak{n}_2$. We now consider the following two possibilities. 

(i) If $\mathfrak{k} = \{0\},$ then from $\eqref{eq:iu},$ we get that $\mathfrak{p}_2 = \{0\}.$ By Theorem $\ref{14},$ $J$ is nilpotent of step $2.$ 

(ii) If $\{0\} \neq \mathfrak{k} \subset \mathfrak{n}_2,$ since $\{0\} \neq \mathfrak{p}_2 \subseteq \mathfrak{k}$ and $J\mathfrak{p}_2 \subset \mathfrak{n}_2$, then by definition, $\mathfrak{p}_3 = \{0\}.$ By Theorem $\ref{14},$ $J$ is nilpotent of step $3.$

Finally, for part (iii), suppose that $\mathfrak{n}_2 = \mathfrak{k} .$ We find that $J$ preserves $\mathfrak{n}_2.$ By Theorem $\ref{5},$ $J$ is strata-preserving. From $\text{Corollary }\ref{44},$ $J$ is nilpotent of step $2.$

In conclusion, $J$ is either nilpotent of step $2$ or $3.$
\end{proof}

\begin{remark}
\label{r47}
(i) If $J$ is nilpotent of step $3,$ then there does not necessarily exist a $J$-invariant stratification. 

(ii) We recall, from $\cite[\text{Theorem }1.3]{gao2020maximal},$ if $\mathfrak{z}$ is not $J$-invariant, then $J$ is nilpotent of step $3.$ From Theorem $\ref{47},$ we have the following table: 

\begin{table}[h] 
    \centering
 \begin{tabular}{ |c|c|c|c|c } 
  \hline
$J$   & Strata-preserving & Non-strata-preserving \\ 
  \hline
$J\mathfrak{z} =\mathfrak{z}$  & $J$ nilpotent of step $2$ & $J$ nilpotent of step $2$ \\ 
  \hline
 $J\mathfrak{z} \neq \mathfrak{z}$ & $J$ nilpotent of step $2$ & $J$ nilpotent of step $3$ \\ 
  \hline
 \end{tabular}   
\caption{\label{2.1}nilpotency of $J$}
\end{table} 
From $\text{Table } \ref{2.1},$ if $J$ is nilpotent of step $2$, then $J$ is either strata-preserving or center-preserving. More precisely, we conclude that either $\mathfrak{k} =\mathfrak{n}_2 \cap J\mathfrak{n}_2 =\{0\}$ or $J\mathfrak{n}_2 = \mathfrak{n}_2.$ Indeed, if $\mathfrak{n}$ is step $2$ nilpotent Lie algebra with a nilpotent complex structure $J$ of step $2,$ $J$ may not be strata-preserving.
\end{remark} 
 
Notice that an even dimensional nilpotent Lie algebra with $\dim \mathfrak{c}_1(\mathfrak{n}) = 1$ has step $2$. There does not exist a $J$-invariant stratification for dimensional reasons. Suppose that $\dim \mathfrak{c}_1(\mathfrak{n}) \geq  2.$ We have the following theorem.
 
\begin{theorem}
\label{49}
Let $\mathfrak{n}$ be a step $2$ stratified Lie algebra with a complex structure $J$. 

(i) Suppose that $\dim \mathfrak{n}_2 =2$. Then  

(a) $J$ is nilpotent of step $2;$ (b) if $\dim \mathfrak{d}^1 = 2,$ then $J\mathfrak{n}_2 = \mathfrak{n}_2$.

(ii) Suppose that $\dim \mathfrak{n}_2 = 2l$ for some $l \geq 2 \in \mathbb{N}$. Furthermore, assume that $\dim \mathfrak{d}^1 \leq 4l-2$ and $J\mathfrak{n}_2 \neq \mathfrak{n}_2$. Then $J$ is nilpotent of step $3.$
\end{theorem}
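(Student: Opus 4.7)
The plan is to apply Theorem \ref{47}, which classifies the nilpotency step of $J$ on a step 2 nilpotent Lie algebra according to the $J$-invariant subspace $\mathfrak{k}:=\mathfrak{n}_2\cap J\mathfrak{n}_2$. Two supporting facts will be used throughout: in any step 2 stratified algebra, $[\mathfrak{n},\mathfrak{n}_2]\subseteq\mathfrak{n}_3=\{0\}$, so $\mathfrak{n}_2\subseteq\mathfrak{z}$; and by the consequence of \cite[Theorem 1.3]{gao2020maximal} recorded in Remark \ref{r47}, whenever $J$ is nilpotent of step $2$ on a step $2$ nilpotent Lie algebra, $\mathfrak{z}$ is $J$-invariant, so $\mathfrak{d}^1=\mathfrak{z}$.

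For part (i), I would first observe that $\mathfrak{k}$ is $J$-invariant, so $\dim\mathfrak{k}$ is even; combined with $\mathfrak{k}\subseteq\mathfrak{n}_2$ and $\dim\mathfrak{n}_2=2$ this forces $\mathfrak{k}\in\{\{0\},\mathfrak{n}_2\}$, and either branch of Theorem \ref{47} yields $J$ nilpotent of step $2$, which is (a). For (b), step-2 nilpotency together with the support fact gives $\mathfrak{d}^1=\mathfrak{z}$, so the hypothesis $\dim\mathfrak{d}^1=2$ forces $\dim\mathfrak{z}=2$; since $\mathfrak{n}_2\subseteq\mathfrak{z}$ and both are $2$-dimensional, $\mathfrak{z}=\mathfrak{n}_2$, and $J\mathfrak{z}=\mathfrak{z}$ then reads $J\mathfrak{n}_2=\mathfrak{n}_2$.

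For part (ii) I would argue by contradiction. The hypothesis $J\mathfrak{n}_2\neq\mathfrak{n}_2$ rules out $\mathfrak{k}=\mathfrak{n}_2$, so by Theorem \ref{47} either $\mathfrak{k}=\{0\}$ (with $J$ nilpotent of step $2$) or $\{0\}\neq\mathfrak{k}\subsetneq\mathfrak{n}_2$ (with $J$ nilpotent of step $3$), and it suffices to exclude the former. Assuming $\mathfrak{k}=\{0\}$, step-2 nilpotency again gives $J\mathfrak{z}=\mathfrak{z}$; combined with $\mathfrak{n}_2\subseteq\mathfrak{z}$ this puts both $\mathfrak{n}_2$ and $J\mathfrak{n}_2$ inside $\mathfrak{z}$, and the vanishing of $\mathfrak{k}$ makes the sum direct, so $\mathfrak{z}\supseteq\mathfrak{n}_2\oplus J\mathfrak{n}_2$ and hence $\dim\mathfrak{d}^1=\dim\mathfrak{z}\geq 4l$, contradicting $\dim\mathfrak{d}^1\leq 4l-2$. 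The main structural move—that each case feeds into Theorem \ref{47}—is immediate; the only delicate point is the directness of $\mathfrak{n}_2+J\mathfrak{n}_2$ under $\mathfrak{k}=\{0\}$, but this is tautological from the definition, so no substantial obstacle arises beyond careful bookkeeping.
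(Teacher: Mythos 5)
Your proof is correct and follows essentially the same route as the paper: the trichotomy on $\mathfrak{k}=\mathfrak{n}_2\cap J\mathfrak{n}_2$ feeding into Theorem~\ref{47}, followed by a dimension count against $\dim\mathfrak{d}^1$ to rule out the step-$2$ case in (i)(b) and (ii). The only difference is minor: where you invoke the $J$-invariance of $\mathfrak{z}$ (the external result recorded in Remark~\ref{r47}) to place $\mathfrak{n}_2\oplus J\mathfrak{n}_2$ inside $\mathfrak{d}^1=\mathfrak{z}$, the paper obtains the same containment internally from $\mathfrak{d}_1=\mathfrak{n}_2+J\mathfrak{n}_2\subseteq\mathfrak{d}^1$ via Theorem~\ref{14} and Lemma~\ref{9}(ii).
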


\begin{proof} 
By Theorem $\ref{47},$ $J$ is nilpotent of either step $2$ or $3.$ 

Start with part (i). Assume that $\dim \mathfrak{n}_2 =2.$ For part (a), notice that $J$ could be either strata-preserving or not. If $J$ is strata-preserving, by Theorem $ \ref{47}$ part (iii), $J$ is nilpotent of step $2$. Otherwise, $J$ is not strata-preserving. Since $\dim \mathfrak{n}_2 =2 $, it follows that $\mathfrak{k} = \{0\}.$ Then by Theorem $ \ref{47}$ part (i), $J$ is Abelian and hence nilpotent of step $2.$   

Next, for part (b), recall that $\mathfrak{d}^1 = \mathfrak{z} \cap J\mathfrak{z}$ is the largest $J$-invariant subspace of $\mathfrak{z} $. Suppose that $\mathfrak{n}_2$ is not $J$-invariant. Then $\mathfrak{k}  = \{0\}.$ From part (i), $J$ is nilpotent of step $2.$ It follows, from $\text{Theorem }\ref{14},$ that $\mathfrak{d}_2 = \{0\}$ and $\mathfrak{d}_1 \subseteq \mathfrak{d}^1.$ However, $\dim \mathfrak{d}_1 = \dim \mathfrak{n}_2 \oplus J\mathfrak{n}_2 =  4 > \dim\mathfrak{d}^1.$ This is a contradiction. Hence $J\mathfrak{n}_2 = \mathfrak{n}_2$.

We now show part (ii). Notice that $l \neq 1.$ Otherwise $\dim \mathfrak{n}_2 = \dim \mathfrak{d}^1 = 2.$ This implies that $J\mathfrak{n}_2 = \mathfrak{n}_2.$  Suppose, by contradiction, that $J$ is not nilpotent of step $3.$ Hence $J$ is nilpotent of step $2.$ Then from Remark $\ref{r47}$ part (ii),  $\mathfrak{k} = \{0\} $ and by definition, $\mathfrak{d}_1  = \mathfrak{n}_2 \oplus J\mathfrak{n}_2\subseteq \mathfrak{d}^1.$ However, $\dim \mathfrak{d}_1 = 4l >    \dim \mathfrak{d}^1.$ This is a contradiction. Hence $\mathfrak{k} \neq \{0\}.$ By Theorem $\ref{47}$ part (ii), $J$ is nilpotent of step $3.$
\end{proof}

\begin{remark}
\label{33}
We can extend the statement of part (i) into a higher step stratification as follows:

Let $\mathfrak{n}$ be a step $k$ stratified Lie algebra with a nilpotent complex structure $J$ of step $k$. Suppose that $\dim \mathfrak{n}_k = 2$ and $\dim \mathfrak{d}^1 =2 $. Then $J\mathfrak{n}_k = \mathfrak{n}_k.$
\end{remark}

\begin{corollary}
\label{36}
Let $\mathfrak{n} = \mathfrak{n}_1 \oplus \mathfrak{n}_2$ be a step $2$ stratified Lie algebra with a complex structure $J$ such that $\dim \mathfrak{n}_2 = 2.$ Then $J$ is center-preserving or strata-preserving or both. Furthermore, suppose that $2 \leq \dim \mathfrak{z} \leq 3$ or $\dim \mathfrak{z} = 4$ and $J\mathfrak{z} \neq \mathfrak{z}.$ Then there exists a $J$-invariant stratification.
\end{corollary}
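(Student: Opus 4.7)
The plan is to reduce everything to earlier results. For the first assertion, Theorem~\ref{49}(i)(a) applies because $\dim \mathfrak{n}_2 = 2$, and yields that $J$ is nilpotent of step $2$. Remark~\ref{r47}(ii) then states explicitly that any complex structure nilpotent of step $2$ on a step $2$ stratified Lie algebra must be strata-preserving or center-preserving (or both), which settles the first claim.

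For the second assertion, I would invoke Theorem~\ref{49}(i)(b): still under $\dim \mathfrak{n}_2 = 2$, it says that $\dim \mathfrak{d}^1 = 2$ forces $J\mathfrak{n}_2 = \mathfrak{n}_2$, i.e., $\mathfrak{c}_1(\mathfrak{n})$ is $J$-invariant. Once this is known, Theorem~\ref{5} directly produces a $J$-invariant stratification. The task therefore reduces to verifying that $\dim \mathfrak{d}^1 = 2$ under each of the three listed dimension hypotheses on $\mathfrak{z}$.

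Since $\mathfrak{d}^1 = \mathfrak{z} \cap J\mathfrak{z}$ is $J$-invariant, its real dimension is even, and because $J$ is nilpotent we have $\mathfrak{d}^1 \neq \{0\}$, so $\dim \mathfrak{d}^1 \geq 2$. If $\dim \mathfrak{z} = 2$ or $3$, then $\dim \mathfrak{d}^1$ is an even positive integer at most $3$, hence equals $2$. If $\dim \mathfrak{z} = 4$ and $J\mathfrak{z} \neq \mathfrak{z}$, then $\mathfrak{d}^1$ is a proper $J$-invariant subspace of $\mathfrak{z}$, so $\dim \mathfrak{d}^1 < 4$, and parity plus positivity again give $\dim \mathfrak{d}^1 = 2$. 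In each case Theorem~\ref{49}(i)(b) applies, making $\mathfrak{n}_2$ $J$-invariant, and Theorem~\ref{5} delivers the desired $J$-invariant stratification.

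There is essentially no technical obstacle here, since the hard work was done already in Theorems~\ref{5} and~\ref{49} and in Remark~\ref{r47}; the only subtlety is the parity-plus-positivity argument pinning $\dim \mathfrak{d}^1$ down to exactly $2$, which rests on the facts that $J$ restricts to a complex structure on $\mathfrak{d}^1$ and that the nilpotency of $J$ forces $\mathfrak{d}^1 \neq \{0\}$.
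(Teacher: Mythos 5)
Your proposal is correct and follows essentially the same route as the paper: Theorem~\ref{49}(i)(a) plus Table~\ref{2.1} for the first claim, then $\dim\mathfrak{d}^1=2$, Theorem~\ref{49}(i)(b), and Theorem~\ref{5} for the second. You actually supply the parity-plus-positivity argument for $\dim\mathfrak{d}^1=2$ that the paper merely asserts, and you cite part (i)(b) where the paper's text mistakenly says ``part (ii)'' of Theorem~\ref{49}, so your version is if anything slightly more careful.
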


\begin{proof}
By $\text{Theorem }\ref{49},$ $J$ is nilpotent of step $2$. Then by $\text{Table }\ref{2.1}$, $J\mathfrak{n}_2 = \mathfrak{n}_2$ or $J\mathfrak{z} = \mathfrak{z}$ or both if $\mathfrak{n}_2 = \mathfrak{z}$.

Furthermore, $\dim \mathfrak{d}^1 = 2$ since $2 \leq \dim \mathfrak{z} \leq 3$ or $\dim\mathfrak{z} = 4$ and $J\mathfrak{z} \neq \mathfrak{z}.$ By part (ii) of $\text{Theorem }\ref{49},$ $J\mathfrak{n}_2 = \mathfrak{n}_2$. Furthermore, by $\text{Theorem }\ref{5},$ there exists a $J$-invariant stratification. 
\end{proof}

Suppose that $\mathfrak{n}$ is a $6$ dimensional step $2$ nilpotent Lie algebra with a complex structure. In $\cite[\text{Table } 1]{MR1899353},$ there is a complete classification of complex structures on these algebras. However, no information is provided on whether or not $J$ preserves the strata.  
 
\begin{corollary}[$\cite{MR2763953,MR1899353}$]
\label{6nil}
Let $\mathfrak{n}$ be a $6$ dimensional step $2$ nilpotent Lie algebra with a complex structure $J $ such that $\dim \mathfrak{c}_1(\mathfrak{n}) =2 .$ Then $\mathfrak{n}$ admits a $J$-invariant stratification. 
\end{corollary}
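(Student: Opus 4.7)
The plan is to reduce everything to Corollary~\ref{36}, which already produces a $J$-invariant stratification for a step $2$ stratified Lie algebra with $\dim \mathfrak{n}_2 = 2$, provided either $2 \leq \dim \mathfrak{z} \leq 3$ or $\dim \mathfrak{z} = 4$ with $J\mathfrak{z} \neq \mathfrak{z}$. So the task is to put $\mathfrak{n}$ in that framework and to control $\dim \mathfrak{z}$.

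First, I would fix an auxiliary stratification. Because $\mathfrak{n}$ is step $2$, the commutator $\mathfrak{c}_1(\mathfrak{n})$ sits inside $\mathfrak{z}$; choosing any vector-space complement $\mathfrak{n}_1$ of $\mathfrak{c}_1(\mathfrak{n})$ in $\mathfrak{n}$ and setting $\mathfrak{n}_2 = \mathfrak{c}_1(\mathfrak{n})$ gives a step $2$ stratification, since $[\mathfrak{n}_1,\mathfrak{n}_1] = [\mathfrak{n}_1 \oplus \mathfrak{n}_2, \mathfrak{n}_1 \oplus \mathfrak{n}_2] = \mathfrak{c}_1(\mathfrak{n}) = \mathfrak{n}_2$. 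The hypothesis $\dim \mathfrak{c}_1(\mathfrak{n}) = 2$ then gives $\dim \mathfrak{n}_2 = 2$, which is the setting of Corollary~\ref{36}.

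Next I would pin down the center. By Theorem~\ref{47}, every complex structure on a step $2$ nilpotent Lie algebra is nilpotent, so Proposition~\ref{16} applies and yields $2 \leq \dim \mathfrak{z} \leq 4$. The main step, and the only non-routine ingredient, is to exclude the possibility $\dim \mathfrak{z} = 4$. For this, since $\mathfrak{c}_1(\mathfrak{n}) \subseteq \mathfrak{z}$, the bracket descends to a surjective alternating map $\Lambda^2(\mathfrak{n}/\mathfrak{z}) \to \mathfrak{c}_1(\mathfrak{n})$. If $\dim \mathfrak{z} = 4$, then $\dim \mathfrak{n}/\mathfrak{z} = 2$ and $\dim \Lambda^2(\mathfrak{n}/\mathfrak{z}) = 1$, which would force $\dim \mathfrak{c}_1(\mathfrak{n}) \leq 1$, contradicting the hypothesis.

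Hence $\dim \mathfrak{z} \in \{2,3\}$, and Corollary~\ref{36} immediately delivers a $J$-invariant stratification. The dimension count in the previous paragraph is the crux; once $\dim \mathfrak{z} = 4$ has been ruled out, the conclusion follows directly from the results already established in Section~3.
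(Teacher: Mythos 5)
Your proposal is correct and follows essentially the same route as the paper: establish nilpotency of $J$ via Theorem~\ref{47}, bound $\dim\mathfrak{z}$ by Proposition~\ref{16}, discard the case $\dim\mathfrak{z}=4$ because it would force $\dim\mathfrak{c}_1(\mathfrak{n})=1$, and then invoke Corollary~\ref{36}. Your explicit construction of the auxiliary stratification $\mathfrak{n}_2=\mathfrak{c}_1(\mathfrak{n})$ and the $\Lambda^2(\mathfrak{n}/\mathfrak{z})$ dimension count merely make precise two steps the paper leaves implicit.
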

 
\begin{proof}
By Theorem $\ref{47}$ and Proposition $\ref{16}$, $J$ is nilpotent and $2 \leq \dim \mathfrak{z} \leq 4$. If $\dim \mathfrak{z}  = 4,$ $\dim \mathfrak{c}_1(\mathfrak{n}) = 1$ and $J$ is not strata-preserving due to dimensional reasons. We omit this case. Next, assume that $\dim \mathfrak{z} \leq 3.$ This is a direct consequence of Corollary $\ref{36}.$ 
 \end{proof}

In what follows, we focus on higher step stratified Lie algebras with complex structures. 
 
\begin{proposition}
Let $\mathfrak{n}$ be a step $3$ stratified Lie algebra with a complex structure $J$.  Suppose that $J\mathfrak{n}_3 = \mathfrak{n}_3$. Then $J$ is nilpotent of step $3.$
\end{proposition}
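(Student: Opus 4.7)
The plan is to invoke Theorem \ref{14}: proving $J$ is nilpotent of step $3$ reduces to showing $\mathfrak{p}_{3}=\{0\}$ and $\mathfrak{p}_{2}\neq\{0\}$. In a step $3$ stratified Lie algebra the grading $[\mathfrak{n}_{i},\mathfrak{n}_{j}]\subseteq\mathfrak{n}_{i+j}$ (with $\mathfrak{n}_{k}=\{0\}$ for $k>3$) together with the stratification axiom $[\mathfrak{n}_{1},\mathfrak{n}_{3}]=\{0\}$ places $\mathfrak{n}_{3}\subseteq\mathfrak{z}$, and the hypothesis $J\mathfrak{n}_{3}=\mathfrak{n}_{3}$ then gives $\mathfrak{n}_{3}\subseteq\mathfrak{d}^{1}=\mathfrak{z}\cap J\mathfrak{z}$. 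Using Remark \ref{rs}, $\mathfrak{p}_{1}=[\mathfrak{n},\mathfrak{n}]=\mathfrak{c}_{1}(\mathfrak{n})=\mathfrak{n}_{2}\oplus\mathfrak{n}_{3}$, and the relations $[\mathfrak{n}_{3},\mathfrak{n}]=\{0\}$ and $J\mathfrak{n}_{3}=\mathfrak{n}_{3}$ give
\[
   \mathfrak{p}_{2} = [\mathfrak{p}_{1},\mathfrak{n}] + [J\mathfrak{p}_{1},\mathfrak{n}] = \mathfrak{n}_{3}+[J\mathfrak{n}_{2},\mathfrak{n}] \supseteq \mathfrak{n}_{3} \neq \{0\},
\]
so $\mathfrak{p}_{2}\neq\{0\}$. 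It remains to establish $\mathfrak{p}_{3}=\{0\}$.

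The cleanest route is via Corollary \ref{43.9}: if every $\mathfrak{c}_{j}(\mathfrak{n})$ is $J$-invariant, then $\mathfrak{p}_{j}=\mathfrak{c}_{j}(\mathfrak{n})$ for all $j\geq 0$ and $J$ is automatically nilpotent of step $k=3$. Since $\mathfrak{c}_{0}(\mathfrak{n})=\mathfrak{n}$ and $\mathfrak{c}_{2}(\mathfrak{n})=\mathfrak{n}_{3}$ are $J$-invariant (the latter by hypothesis), the only remaining claim is $J$-invariance of $\mathfrak{c}_{1}(\mathfrak{n})=\mathfrak{n}_{2}\oplus\mathfrak{n}_{3}$, equivalently $\pi_{1}(J\mathfrak{n}_{2})=\{0\}$, where $\pi_{1}$ denotes the projection onto $\mathfrak{n}_{1}$. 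For this I would write $X=[P,Q]\in\mathfrak{n}_{2}$ with $P,Q\in\mathfrak{n}_{1}$ and apply the Newlander--Nirenberg consequence
\[
   J[P,Q]=[JP,Q]+[P,JQ]+J[JP,JQ].
\]
The first two summands lie in $\mathfrak{n}_{2}+\mathfrak{n}_{3}$ by the grading, so they contribute nothing under $\pi_{1}$; for the third, $[JP,JQ]\equiv [\pi_{1}(JP),\pi_{1}(JQ)] \pmod{\mathfrak{n}_{3}}$ and $J\mathfrak{n}_{3}=\mathfrak{n}_{3}$ give the self-referential identity
\[
   \pi_{1}(J[P,Q]) = \pi_{1}\!\bigl(J[\pi_{1}(JP),\pi_{1}(JQ)]\bigr).
\]

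Combining this identity with the linear-algebraic constraint $(\pi_{1}J|_{\mathfrak{n}_{1}})^{2} + (\pi_{1}J|_{\mathfrak{n}_{2}})(\pi_{2}J|_{\mathfrak{n}_{1}}) = -I|_{\mathfrak{n}_{1}}$ (which comes from $J^{2}=-I$ together with $J\mathfrak{n}_{3}=\mathfrak{n}_{3}$), the aim is to force $\pi_{1}J|_{\mathfrak{n}_{2}}=0$. The main obstacle is exactly this extraction: the recursion above is a fixed-point relation on $\pi_{1}\circ J$ restricted to $[\mathfrak{n}_{1},\mathfrak{n}_{1}]$, and one must exploit the finite-dimensionality of $\mathfrak{n}_{1}$ together with the quadratic identity to force vanishing. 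Once $\pi_{1}(J\mathfrak{n}_{2})=\{0\}$ is secured, $\mathfrak{c}_{1}(\mathfrak{n})$ is $J$-invariant, Corollary \ref{43.9} applies, and $J$ is nilpotent of step $3$.
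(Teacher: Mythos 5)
Your reduction to Theorem \ref{14} and the verification that $\mathfrak{p}_2 \supseteq \mathfrak{n}_3 \neq \{0\}$ match the paper, but the heart of the matter, $\mathfrak{p}_3=\{0\}$, is not actually proved. Your route through Corollary \ref{43.9} requires showing that $\mathfrak{c}_1(\mathfrak{n})=\mathfrak{n}_2\oplus\mathfrak{n}_3$ is $J$-invariant, i.e.\ $\pi_1(J\mathfrak{n}_2)=\{0\}$, and this is exactly the step you leave as ``the aim''/``the main obstacle'': the fixed-point identity $\pi_1(J[P,Q])=\pi_1\bigl(J[\pi_1(JP),\pi_1(JQ)]\bigr)$ together with the constraint coming from $J^2=-I$ does not visibly force $\pi_1 J|_{\mathfrak{n}_2}=0$, and no argument is given. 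Worse, the claim you are chasing is strictly stronger than the proposition itself: $J$-invariance of $\mathfrak{c}_1(\mathfrak{n})$ would in particular force $\dim\mathfrak{n}_2$ to be even, which the hypotheses ($\mathfrak{n}$ step $3$ stratified, $J\mathfrak{n}_3=\mathfrak{n}_3$) do not obviously deliver; nothing in the statement rules out $\dim\mathfrak{n}_2$ odd. So even granting all the identities you derive, the proof has a genuine gap at its decisive step, and the strategy itself may be aiming at something false in general.

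The paper avoids this entirely by proving a weaker containment that already suffices. Applying the Newlander--Nirenberg condition directly to $U\in\mathfrak{c}_1(\mathfrak{n})$ gives
\begin{equation*}
   [JU,JX]-J[JU,X] \;=\; [U,X]+J[U,JX] \;\in\; \mathfrak{c}_2(\mathfrak{n})+J\mathfrak{c}_2(\mathfrak{n}) \;=\; \mathfrak{n}_3+J\mathfrak{n}_3 \;=\; \mathfrak{n}_3,
\end{equation*}
from which one deduces $[J\mathfrak{c}_1(\mathfrak{n}),\mathfrak{n}]\subseteq\mathfrak{n}_3$. Hence $\mathfrak{p}_2=\mathfrak{n}_3+[J\mathfrak{c}_1(\mathfrak{n}),\mathfrak{n}]\subseteq\mathfrak{n}_3$ and $J\mathfrak{p}_2\subseteq J\mathfrak{n}_3=\mathfrak{n}_3\subseteq\mathfrak{z}$, so $\mathfrak{p}_3=[\mathfrak{p}_2,\mathfrak{n}]+[J\mathfrak{p}_2,\mathfrak{n}]=\{0\}$, and Theorem \ref{14} concludes. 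Note that this argument makes no claim whatsoever about $J\mathfrak{n}_2$ or about $J$-invariance of $\mathfrak{c}_1(\mathfrak{n})$: only the brackets of $J\mathfrak{c}_1(\mathfrak{n})$ need to land in $\mathfrak{n}_3$. If you want to salvage your write-up, replace the appeal to Corollary \ref{43.9} by this weaker bracket estimate; as it stands, the proposal is incomplete.
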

 
 \begin{proof}
By the definition of the descending central series $\mathfrak{p}_j$ in $\eqref{eq:p},$ $\{0\} \neq \mathfrak{p}_2 = \mathfrak{n}_3 + [J\mathfrak{c}_1(\mathfrak{n}),\mathfrak{n}].$ On the one hand, suppose that $[J\mathfrak{c}_1(\mathfrak{n}),\mathfrak{n}] = \{0\}.$ We deduce that $\mathfrak{p}_2 = \mathfrak{n}_3$ and hence $\mathfrak{p}_3 = \{0\}$ by definition. Using Theorem $\ref{14},$ $J$ is nilpotent of step $3.$ On the other hand, suppose that $ [J\mathfrak{c}_1(\mathfrak{n}),\mathfrak{n}] \neq \{0\}.$ Then by the Newlander--Nirenberg condition, for all $U \in \mathfrak{c}_1(\mathfrak{n})$ and $X,JX \in \mathfrak{n}$ \begin{align*}
    0 \neq \underbrace{[JU,JX] - J[JU,X]}_\text{$\in [J\mathfrak{c}_1(\mathfrak{n}),\mathfrak{n}] + J[J\mathfrak{c}_1(\mathfrak{n}),\mathfrak{n}] $} = \underbrace{[U,X]+J[U,JX]}_\text{$\in \mathfrak{n}_3$}.
\end{align*} Hence $[J\mathfrak{c}_1(\mathfrak{n}),\mathfrak{n}] \subseteq \mathfrak{n}_3.$ This implies that $\mathfrak{p}_2 \subseteq \mathfrak{n}_3$ and therefore $J\mathfrak{p}_2 \subseteq \mathfrak{n}_3.$ Then $\mathfrak{p}_3 = [\mathfrak{p}_2,\mathfrak{n}] + [J\mathfrak{p}_2,\mathfrak{n}] = \{0\}.$ Again by Theorem \ref{14}, $J$ is nilpotent of step $3.$
 \end{proof}

\begin{proposition}
Let $\mathfrak{n}$ be a $8$ dimensional step $3$ stratified Lie algebra with a complex structure $J$ such that $2\dim \mathfrak{n}_3 = \dim \mathfrak{c}_1(\mathfrak{n}) = 4$. Suppose that $J\mathfrak{n}_3 \neq \mathfrak{n}_3$ and $\dim \mathfrak{z} \leq 3.$ Then $J$ is nilpotent of step $4.$ Furthermore, $\mathfrak{d}_2 =  \mathfrak{n}_3 \oplus J\mathfrak{n}_3$.
\end{proposition}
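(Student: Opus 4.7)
\emph{Proof plan.} The plan is to combine the Newlander--Nirenberg identity with a dimension count, in the spirit of the preceding proposition.

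First I would record the elementary consequences of the hypotheses. Since $\mathfrak{n}$ is step $3$ stratified with $\dim\mathfrak{n}_3=2$, we have $\mathfrak{n}_3=\mathfrak{c}_2(\mathfrak{n})\subseteq\mathfrak{z}$. The assumption $J\mathfrak{n}_3\neq\mathfrak{n}_3$ combined with $\dim\mathfrak{n}_3=2$ forces $\mathfrak{n}_3\cap J\mathfrak{n}_3=\{0\}$, because any nonzero $J$-invariant subspace of $\mathfrak{n}_3$ would have to equal $\mathfrak{n}_3$. Hence $\mathfrak{n}_3\oplus J\mathfrak{n}_3$ is $4$-dimensional, and by Remark~\ref{r35} it is contained in $\mathfrak{d}_2$. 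If $J\mathfrak{n}_3\subseteq\mathfrak{z}$, then $\dim\mathfrak{z}\geq 4$, contradicting $\dim\mathfrak{z}\leq 3$; therefore $[J\mathfrak{n}_3,\mathfrak{n}]\neq\{0\}$.

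Next I would exploit the Newlander--Nirenberg identity. For $Z\in\mathfrak{n}_3\subseteq\mathfrak{z}$ and $Y\in\mathfrak{n}$, it reduces to $[JZ,JY]=J[JZ,Y]$, so $[J\mathfrak{n}_3,\mathfrak{n}]$ is a $J$-invariant subspace of $\mathfrak{c}_1(\mathfrak{n})$. For $U\in\mathfrak{c}_1(\mathfrak{n})$ and $Y\in\mathfrak{n}$, since $[U,Y],[U,JY]\in\mathfrak{c}_2(\mathfrak{n})=\mathfrak{n}_3$, one obtains the analogue of the key identity from the previous proposition,
\[
[JU,JY]-J[JU,Y]=[U,Y]+J[U,JY]\in\mathfrak{n}_3\oplus J\mathfrak{n}_3,
\]
which says that modulo $\mathfrak{n}_3\oplus J\mathfrak{n}_3$ the operator $\mathrm{ad}_{JU}$ commutes with $J$.

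For the main step I would first use Lemma~\ref{9}(ii) to rule out $J$ being nilpotent of step $\leq 3$: otherwise $\mathfrak{d}_2\subseteq\mathfrak{d}^1\subseteq\mathfrak{z}$, contradicting $\dim\mathfrak{d}_2\geq 4>\dim\mathfrak{z}$. Combined with $j_0\geq k=3$ and $j_0\leq\tfrac{1}{2}\dim\mathfrak{n}=4$ from Remark~\ref{r7}(i), once $J$ is nilpotent its step equals $4$. Under step $4$, the strict-increase property of the ascending $J$-invariant central series together with $\dim\mathfrak{d}^1=2$ (forced by $\dim\mathfrak{z}\leq 3$) gives $\dim\mathfrak{d}^j=2j$ for $0\leq j\leq 4$; then $\mathfrak{d}_2\subseteq\mathfrak{d}^2$ has dimension at most $4$ while $\mathfrak{d}_2\supseteq\mathfrak{n}_3\oplus J\mathfrak{n}_3$ has dimension $4$, forcing $\mathfrak{d}_2=\mathfrak{n}_3\oplus J\mathfrak{n}_3$.

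The principal obstacle I expect is establishing the nilpotency of $J$ itself, which is needed before the dimension count above applies. I would approach this by iterating the Newlander--Nirenberg congruence together with the $J$-invariance of $\mathfrak{d}_2=\mathfrak{p}_2+J\mathfrak{p}_2$ to pull $\mathfrak{p}_2$ inside $\mathfrak{n}_3\oplus J\mathfrak{n}_3$, and then propagating down the $\mathfrak{p}_j$ series: $\mathfrak{p}_3\subseteq[J\mathfrak{n}_3,\mathfrak{n}]\subseteq\mathfrak{c}_1(\mathfrak{n})$, $\mathfrak{p}_4\subseteq[[J\mathfrak{n}_3,\mathfrak{n}],\mathfrak{n}]\subseteq\mathfrak{n}_3$, and finally $\mathfrak{p}_4=\{0\}$ via a further Newlander--Nirenberg computation together with the dimension bound $\dim\mathfrak{z}\leq 3$. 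Theorem~\ref{14} then delivers that $J$ is nilpotent of step exactly $4$, and the dimension argument above completes the identification $\mathfrak{d}_2=\mathfrak{n}_3\oplus J\mathfrak{n}_3$.
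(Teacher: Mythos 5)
Your architecture for the first claim matches the paper's: grant that $J$ is nilpotent, use $k\le j_0\le\tfrac{1}{2}\dim\mathfrak{n}$ from Remark~\ref{r7}(i) to get $3\le j_0\le 4$, and rule out $j_0=3$ because $\mathfrak{d}_2\supseteq\mathfrak{n}_3\oplus J\mathfrak{n}_3$ is $4$-dimensional while Lemma~\ref{9}(ii) would force $\mathfrak{d}_2\subseteq\mathfrak{d}^1\subseteq\mathfrak{z}$ with $\dim\mathfrak{z}\le 3$. Your treatment of the second claim is different from, and cleaner than, the paper's: the paper attempts a direct Newlander--Nirenberg computation to show $\mathfrak{d}_2\subseteq\mathfrak{n}_3\oplus J\mathfrak{n}_3$, whereas you squeeze $\mathfrak{n}_3\oplus J\mathfrak{n}_3\subseteq\mathfrak{d}_2\subseteq\mathfrak{d}^2$ and note that the strictly increasing $J$-invariant series $\mathfrak{d}^0\subset\cdots\subset\mathfrak{d}^4=\mathfrak{n}$ in dimension $8$ must jump by exactly $2$ each time, so $\dim\mathfrak{d}^2=4$ and equality follows. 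That squeeze is correct and avoids the delicate bracket manipulations.

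The genuine gap is exactly where you flagged it: the nilpotency of $J$. The paper does not prove this at all; it imports it from \cite[Corollary 3.12]{MR4009385} and only then runs the dimension count, so nothing in the paper supports your hope of a self-contained derivation. Your proposed route (pull $\mathfrak{p}_2$ into $W:=\mathfrak{n}_3\oplus J\mathfrak{n}_3$, propagate to $\mathfrak{p}_3\subseteq[J\mathfrak{n}_3,\mathfrak{n}]$ and $\mathfrak{p}_4\subseteq[[J\mathfrak{n}_3,\mathfrak{n}],\mathfrak{n}]$, then kill $\mathfrak{p}_4$) stalls at its first step: the Newlander--Nirenberg congruence $[JU,JY]\equiv J[JU,Y]\pmod{W}$ only says that $[J\mathfrak{c}_1(\mathfrak{n}),\mathfrak{n}]+W$ is $J$-invariant modulo $W$, not that $[J\mathfrak{c}_1(\mathfrak{n}),\mathfrak{n}]\subseteq W$; excluding a nonzero $J$-invariant image in $\mathfrak{n}/W$ requires a further argument, since $\bigl(\mathfrak{c}_1(\mathfrak{n})+W\bigr)/W$ can itself be $2$-dimensional. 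Likewise ``$\mathfrak{p}_4=\{0\}$ via a further Newlander--Nirenberg computation'' is asserted rather than proved. As written, the nilpotency of $J$ is not established; either carry out that computation in full or, as the paper does, cite the known result for $8$-dimensional nilpotent Lie algebras with complex structure. Once nilpotency is in hand, the rest of your argument goes through.
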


\begin{proof} 
Since $\mathfrak{n}_3 \subseteq \mathfrak{z},$ $\dim \mathfrak{z} \geq 2.$ By $\cite[\text{Corollary }3.12]{MR4009385},$ $J$ is nilpotent. Then using Remark $\ref{r7}$ (i), $3 \leq j_0 \leq 4$, where $j_0$ is the nilpotent step of $J.$ Suppose, by contradiction, that $J$ is nilpotent of step $3.$ It follows, from the equation $\eqref{eq:ew}$, that $\mathfrak{n}_3 + J\mathfrak{n}_3 \subseteq \mathfrak{d}_2 \subseteq \mathfrak{d}^1 \subseteq \mathfrak{z}.$ On the one hand, since $\dim \mathfrak{z} \leq  3,$ $\dim \mathfrak{d}^1 =2.$ On the other hand, since $J\mathfrak{n}_3 \neq \mathfrak{n}_3$ and $\dim \mathfrak{n}_3 = 2,$ $\mathfrak{n}_3 \cap J\mathfrak{n}_3 = \{0\}$ and therefore $ \dim \mathfrak{n}_3 \oplus J\mathfrak{n}_3 = 4 > \dim \mathfrak{d}^1.$ This is a contradiction. So $J$ is nilpotent of step $4.$
 
We now show that $\mathfrak{d}_2 =  \mathfrak{n}_3 \oplus J\mathfrak{n}_3$. It is sufficient to show that $\mathfrak{d}_2 \subseteq  \mathfrak{n}_3 \oplus J\mathfrak{n}_3.$ By definition, \begin{align*}
    \mathfrak{d}_2 & = [\mathfrak{d}_1,\mathfrak{n}] +J[\mathfrak{d}_1,\mathfrak{n}] \\
    & = \mathrm{span}\left\{ [T,X] + J[T',X'] : \text{ } \forall \text{ } T,T' \in \mathfrak{d}_1, \text{ } \forall \text{ } X,X' \in \mathfrak{n} \right\}.
\end{align*} For all $ T,T' \in \mathfrak{d}_1,$ we may write $T = U + JV$ and $T' = U' + JV'$ where $U,V,U',V' \in \mathfrak{c}_1(\mathfrak{n}).$ Then \begin{equation}
 0 \neq   [T,X] + J[T',X']=   \underbrace{[U,X]+J[U',X']}_\text{$ \in \mathfrak{n}_3 \oplus J\mathfrak{n}_3 $} +[JV,X]  + J[JV',X']. \label{eq:90}
\end{equation} By the Newlander--Nirenberg condition, \begin{align*}
 0\neq  \underbrace{[JV,X] + J[JV,JX]}_\text{$ \in  [J\mathfrak{c}_1(\mathfrak{n}),\mathfrak{n}] + J[J\mathfrak{c}_1(\mathfrak{n}),\mathfrak{n}]$} = J[V,X]-[V,X] \in \mathfrak{n}_3 \oplus J\mathfrak{n}_3.
\end{align*} Hence $ [JV,X] + J[JV', X'] \in \mathfrak{n}_3 \oplus J\mathfrak{n}_3.$  From $\eqref{eq:90},$ $[T,X] + J[T',X'] \in \mathfrak{n}_3 \oplus J\mathfrak{n}_3.$ Hence $\mathfrak{d}_2 \subseteq  \mathfrak{n}_3 \oplus J\mathfrak{n}_3.$ In conclusion, $\mathfrak{d}_2 =  \mathfrak{n}_3 \oplus J\mathfrak{n}_3.$
\end{proof}

\section*{Acknowledgement}
The results of this paper are contained in the author's Master of Science thesis at the University of New South Wales, prepared under the the supervision of Michael. G. Cowling and Alessandro Ottazzi. I would like to give deep thanks to both of them, for guiding and sharing their views on mathematics. They also provided very useful comments and suggestions on the project.

\bibliographystyle{plain}
\bibliography{bibliography.bib}

\end{document}